\declaretheorem[name=Theorem,numberwithin=section]{thm} 
\newtheorem*{thm*}{Theorem}
\newtheorem*{define*}{Definition}
\newtheorem{define}[thm]{Definition}
\newtheorem*{lemma*}{Lemma}
\newtheorem{lemma}[define]{Lemma}
\newtheorem*{algorithm*}{Algorithm}
\newtheorem*{construction*}{Construction}
\newtheorem*{prop*}{Proposition}
\newtheorem*{obs*}{Observation}
\newtheorem*{fact*}{Fact}
\newtheorem*{remark*}{Remark}
\newtheorem*{quest*}{Question}
\newtheorem{quest}[define]{Question}
\newtheorem*{cor*}{Corollary}
\newtheorem{cor}[define]{Corollary}
\newtheorem*{conjecture*}{Conjecture}
\newtheorem*{question*}{Question}
\newtheorem*{example*}{Example}
\numberwithin{claimcounter}{define}
\newtheorem*{claim*}{Claim}
\newcommand{\hl}[1]{#1}
\newcommand{\mf}{\mathfrak}
\newcommand{\Sph}{\mathbb{S}}
\newcommand{\R}{\mathbb{R}}
\newcommand{\Q}{\mathbb{Q}}
\newcommand{\Z}{\mathbb{Z}}
\newcommand{\lip}{\operatorname{Lip}}
\newcommand{\dist}{\operatorname{dist}}
\newcommand{\N}{\mathbb{N}}
\newcommand{\id}{\operatorname{id}}
\DeclareMathOperator{\diam}{diam}
\newcommand{\mb}[1]{\mathbf{#1}}
\newcommand{\mc}[1]{\mathcal{#1}}
\newcommand{\abs}[1]{\left|#1\right|}
\newcommand{\lnorm}[2]{\left\|#2\right\|_#1}
\newcommand{\supnorm}[1]{\lnorm{\infty}{#1}}
\newcommand{\norm}[1]{\left\|#1\right\|}
\newcommand{\set}[1]{\left\{#1\right\}}
\newcommand{\cl}[1]{\overline{#1}}
\def\XXint#1#2#3{{\setbox0=\hbox{$#1{#2#3}{\int}$ }
		\vcenter{\hbox{$#2#3$ }}\kern-.6\wd0}}
\title{Porosity phenomena of non-expansive, Banach space mappings.}
\author{Michael Dymond}
\begin{document}
	\maketitle
	\abstract{For any non-trivial convex and bounded subset $C$ of a Banach space, we show that outside of a $\sigma$-porous subset of the space of non-expansive mappings $C\to C$, all mappings have the maximal Lipschitz constant one witnessed locally at typical points of $C$. This extends a result of Bargetz and the author from separable Banach spaces to all Banach spaces and the proof given is completely independent. We further establish a fine relationship between the classes of exceptional sets involved in this statement, captured by the hierarchy of notions of $\phi$-porosity.}
	\section{Introduction}	
	The set of strict contractions $C\to C$, where $C$ is a closed, convex and bounded set, is of particular interest in fixed point theory. Recall that Banach's classical fixed point theorem states that every strict contraction of a complete metric space has a unique fixed point, to which all iterations converge. However, if we wish to view the strict contractions as elements of a complete metric space, we arrive at the space of non-expansive mappings $C\to C$ (or mappings $C\to C$ with Lipschitz constant at most one)  and the conclusion of Banach's fixed point theorem need not be valid for such mappings. Thus it is a natural question to ask how many non-expansive mappings $C\to C$ possess a unique fixed point. The size of the set of strict contractions in spaces of non-expansive mappings is clearly relevant to this question. 
	
	When $C$ is a subset of a Hilbert space, De Blasi and Myjak~\cite{DBM1989porosite} prove that the set of strict contractions is a negligible subset of the space of non-expansive mappings. More precisely, they show that the strict contractions form a $\sigma$-porous subset. The argument of \cite{DBM1989porosite} relies heavily on Kirszbraun's theorem for extending Lipschitz mappings between Hilbert spaces. Thus, these methods do not transfer to the general Banach space setting, leading Reich~\cite{reich_genericity} to pose the question of whether this porosity result holds in Banach spaces.
	\begin{quest}[Reich~\cite{reich_genericity}]\label{q:reich}
		Let $X$ be a Banach space, $C\subseteq X$ be a closed, bounded, convex, non-singleton, non-empty set and $\mc{M}=\mc{M}(C)$ denote the space of non-expansive mappings $C\to C$ equipped with the supremum metric. Is the set
		\begin{equation*}
			\set{f\in \mc{M}\colon \lip(f)<1}
		\end{equation*}
		of strict contractions $C\to C$ a $\sigma$-porous subset of $\mc{M}$?
	\end{quest}
	Bargetz and the author answer Reich's question affirmatively in \cite{bargetz_dymond2016}. The present note, provides an independent answer to Reich's question and further strengthens the results of \cite{bargetz_dymond2016}.
	
	Despite strict contractions only forming a very small subset of the space of non-expansive mappings, it turns out that most non-expansive mappings have a unique fixed point. In the Banach space setting, Reich and Zavlaski~ show that outside of a $\sigma$-porous subset of the space of non-expansive mappings $C\to C$ all mappings are contractive in the sense of Rakotch~\cite{rakotch1962note}, meaning they decrease distances between points by a factor which is allowed to behave as a monotonic function of the distance between the two points; for a more precise formulation see for example \cite[Definition~3.3.15]{istruactescu2001fixed}. Such Rakotch contractive mappings are shown to have the fixed point property in \cite{rakotch1962note}.
	
	The problems discussed above have also been studied in more general metric spaces and for more general classes of mappings; see, for example, \cite{RZ2016two}, \cite{BDR2017porosity} and \cite{BDMR2021existence}. 
	
	When $C$ is a subset of a separable Banach spaces, the paper \cite{bargetz_dymond2016} proves a stronger result than just the $\sigma$-porosity of the set of strict contractions. In fact \cite{bargetz_dymond2016} proves that outside of a $\sigma$-porous subset of the space of non-expansive mappings $C\to C$, all mappings $f$ have the property that the set $R(f)$ defined below in Theorems~\ref{thm:main} and \ref{thm:dual} is a residual subset of $C$. The quantity $\lip(f,x)$ may be informally described as the local Lipschitz constant of $f$ at $x$ and is defined in the next section by \eqref{eq:lipfx}. Thus, roughly speaking, \cite{bargetz_dymond2016} proves that outside of a $\sigma$-porous subset of the space of non-expansive mappings, all mappings have the maximal Lipschitz constant one and this maximal Lipschitz constant is witnessed locally at typical points of the domain $C$. 
	
	One contribution of the present note is to extend this result to all Banach spaces. We prove the following theorem, which is stronger than both main results (Theorems~2.1 and 2.2) of \cite{bargetz_dymond2016}; in particular, it removes the separability condition on the Banach space from \cite[Theorem~2.2]{bargetz_dymond2016}.
	\begin{thm}\label{thm:main}
		Let $X$ be a Banach space, $C\subseteq X$ be a closed, bounded, convex, non-singleton, non-empty set and $\mc{M}=\mc{M}(C)$ denote the space of non-expansive mappings $C\to C$ equipped with the supremum metric. Then there is a $\sigma$-lower porous subset $\mc{N}$ of $\mc{M}$ such that for every $f\in \mc{M}\setminus \mc{N}$ the set
		\begin{equation*}
			R(f):=\set{x\in C\colon \lip(f,x)=1}
		\end{equation*}
		is a residual subset of $C$.
	\end{thm}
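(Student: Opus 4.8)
The plan is to deduce the statement from a porosity property of a countable family of \emph{closed} subsets of $\mc{M}$. Since every $f\in\mc{M}$ is non-expansive we have $\lip(f,x)\le1$ everywhere, so $R(f)=\set{x\in C:\lip(f,x)\ge1}$ and $C\setminus R(f)$ is the union over $n,k\in\N$ of the sets
\begin{equation*}
 E_{n,k}(f):=\set{x\in C:\ \norm{f(y)-f(x)}\le\br*{1-\tfrac1n}\norm{y-x}\ \text{whenever }y\in C,\ 0<\norm{y-x}<\tfrac1k}.
\end{equation*}
Each $E_{n,k}(f)$ is closed in $C$, and $C$, being a closed, bounded, convex subset of a Banach space, is a complete metric space, hence Baire. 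Therefore $R(f)$ is residual in $C$ exactly when every $E_{n,k}(f)$ has empty interior in $C$; and if this fails, $f$ lies in one of the sets
\begin{equation*}
 \mc{A}=\mc{A}_{n,k,c,\rho}:=\set{f\in\mc{M}:\ \cl{B}(c,\rho)\cap C\subseteq E_{n,k}(f)},
\end{equation*}
where $c$ runs over a fixed countable dense subset of $C$ and $\rho$ over the positive rationals. Each $\mc{A}_{n,k,c,\rho}$ is closed in $\mc{M}$ (pass to the limit in the defining inequalities). Thus it suffices to show that each such $\mc{A}$ is lower porous in $\mc{M}$, and to take $\mc{N}$ to be the union of these countably many sets.

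The mechanism driving the porosity is a slack estimate for $f\in\mc{A}_{n,k,c,\rho}$: for every $x\in\cl B(c,\rho)\cap C$ and every $y\in C$,
\begin{equation*}
 \norm{f(x)-f(y)}\le\norm{x-y}-\tfrac1n\min\set{\tfrac1{2k},\norm{x-y}}.
\end{equation*}
This follows by travelling from $x$ towards $y$ along the segment $[x,y]\subseteq C$ a distance $\min\set{1/2k,\norm{x-y}}$: on this first leg $f$ contracts by the factor $1-1/n$ (it stays in a ball of radius $<1/k$ about $x\in\cl B(c,\rho)\cap C$), while being merely non-expansive on the remainder. So a map in $\mc{A}$ loses a definite amount on every segment with an endpoint in $\cl B(c,\rho)\cap C$, and this is what leaves room to install a steep spot near $c$ without destroying non-expansiveness elsewhere.

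Fix $f\in\mc{A}=\mc{A}_{n,k,c,\rho}$ and a small $\eta>0$, and set $\lambda:=\tfrac1{5n}$; I will produce $g\in\mc{M}$ with $\norm{g-f}_\infty\le(1-\lambda)\eta$ and $\cl B(g,\lambda\eta)\cap\mc{A}=\emptyset$, which gives $\cl B(g,\lambda\eta)\subseteq\cl B(f,\eta)\setminus\mc{A}$ and hence, as $\eta$ is arbitrarily small, lower porosity with constant $\lambda$. Choose once and for all (depending only on $c,\rho,C$) a non-degenerate chord of $C$ inside $\cl B(c,\rho/2)\cap C$ with initial point $x_0$ and unit direction $v$, so $x_0+sv\in C$ for a definite range of $s\ge0$; choose $\zeta\in C$ with $\norm{\zeta-f(x_0)}\ge\tfrac12\diam C$ (possible since $\sup_{z\in C}\norm{z-f(x_0)}\ge\tfrac12\diam C$) and put $\hat w:=(\zeta-f(x_0))/\norm{\zeta-f(x_0)}$, so that $f(x_0)+s\hat w\in C$ for $0\le s\le\tfrac12\diam C$; and set $R:=n\eta$ (small, so $R\le\tfrac1{2k}$, $R\le\rho/2$, $\tfrac\eta2\le\tfrac12\diam C$). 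The map $g$ equals $f$ off $B(x_0,R)$; on the segment $S:=\set{x_0+tv:0\le t\le\eta/2}\subseteq B(x_0,R)\cap C$ it is the unit-speed curve $g(x_0+tv):=f(x_0)+t\hat w$; and on the rest of $B(x_0,R)\cap C$ it is interpolated so as to be non-expansive, $C$-valued, equal to $f$ on $\set{\norm{x-x_0}=R}\cap C$, and to satisfy $\norm{g(x)-f(x)}\le\min\set{(1-\lambda)\eta,\ \tfrac1n(R-\norm{x-x_0})}$. Granting this interpolation, the slack estimate yields $\norm{g(x)-f(y)}\le\norm{g(x)-f(x)}+\norm{f(x)-f(y)}\le\norm{x-y}$ whenever $x\in B(x_0,R)\cap C$ and $y\in C\setminus B(x_0,R)$ (using $\norm{x-y}\ge R-\norm{x-x_0}$ and $R\le 1/2k$), so $g\in\mc{M}$; and one checks $\norm{g(x_0+tv)-f(x_0+tv)}\le(2-\tfrac1n)t$, whence $\norm{g-f}_\infty\le(1-\lambda)\eta$. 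Finally, $g$ moves the pair $a=x_0,\ b=x_0+\tfrac\eta2v$ isometrically, so any $h$ with $\norm{h-g}_\infty\le\lambda\eta$ moves it by at least $\tfrac\eta2-2\lambda\eta$, i.e.\ by more than the factor $1-1/n$ over the distance $\norm{a-b}=\tfrac\eta2$ (since $4\lambda<1/n$); hence $x_0\notin E_{n,k}(h)$ and $h\notin\mc{A}$.

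The step I expect to be the main obstacle is the interpolation on $B(x_0,R)\cap C$: one must extend a non-expansive, $C$-valued map given on $(\set{\norm{x-x_0}=R}\cap C)\cup S$ to a non-expansive, $C$-valued map on $B(x_0,R)\cap C$ lying within the tapering bound of $f$. In a Hilbert space this is immediate from Kirszbraun's theorem, and it is precisely the absence of such an extension principle that obstructs the De~Blasi--Myjak argument in a general Banach space (the separable case of \cite{bargetz_dymond2016} proceeded through finite-dimensional approximation). The way around it should be to keep the perturbation essentially one-dimensional — anchored at $x_0$, its values moving along the fixed chord $[f(x_0),\zeta]$ of $C$ near $S$ and blending back to $f$ near the sphere — so that, by convexity of $C$, membership in $C$ is automatic, and, with the modified region taken wide compared with $\eta/n$, the verification of non-expansiveness inside $B(x_0,R)$ reduces to a one-variable estimate of exactly the type one meets already for $C=[0,1]$; no genuine vector-valued extension theorem is then needed. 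Once each $\mc{A}_{n,k,c,\rho}$ is shown to be lower porous with constant $\tfrac1{5n}$, the union over the countably many quadruples $(n,k,c,\rho)$ is the required $\sigma$-lower porous set $\mc{N}$.
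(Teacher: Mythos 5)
Your overall architecture is sound in spirit and your ``slack estimate'' is a genuinely nice observation (the paper pays for the local surgery differently, by a global rescaling $g_1=(1-\delta/r)g_0$ rather than by exploiting the contraction already present in members of the bad set). However, there are two genuine gaps, and together they account for essentially all of the content of the paper's proof.

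First, your decomposition of the exceptional set is built on ``a fixed countable dense subset of $C$''. No such subset exists in general: $X$ is an arbitrary Banach space and a closed bounded convex $C$ need not be separable (e.g.\ the unit ball of a non-separable space). Without it, the family $\set{\mc{A}_{n,k,c,\rho}}$ is uncountable and its union is not a priori $\sigma$-lower porous, or even meagre. This is not a technicality to be patched later --- removing separability is the main point of the theorem relative to \cite{bargetz_dymond2016}. To repair it you would have to show that, for fixed $n,k,\rho$, the single set $\bigcup_{c\in C}\mc{A}_{n,k,c,\rho}$ is lower porous; but your perturbation installs a steep spot only near the one chosen $c$, so the ball $\cl{B}(g,\lambda\eta)$ you produce avoids only that one $\mc{A}_{n,k,c,\rho}$. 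Avoiding the union forces you to install steep spots near a $\rho$-net of centres $c$ \emph{simultaneously} with a single perturbation of size $\eta$ --- which is exactly the role of the (possibly uncountable) $s$-separated sets $\Gamma_j$ and the simultaneous modification of Lemma~\ref{lemma:every_village} in the paper.

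Second, the interpolation you defer is not a routine one-variable estimate; it is where the key idea lives. Your $g$ must be the unit-speed curve $t\mapsto f(x_0)+t\hat w$ on the chord $S$ (you need exact isometry on the pair $a,b$ for the stability step), yet agree with $f$ on the sphere $\norm{x-x_0}=R$ and be non-expansive in between. Compare $x=x_0+tv\in S$ with a nearby off-chord point $y=x_0+tv'$, $\norm{v-v'}$ small: non-expansiveness forces $\norm{g(y)-\br{f(x_0)+t\hat w}}\le t\norm{v-v'}$, so near $S$ the map $g$ must be essentially constant on spheres about $x_0$ and cannot have the form ``$f$ plus a scalar bump times $\hat w$''; in particular $\norm{g(y)-f(y)}$ is forced to be of order $t/n$ even though $\norm{f(y)-f(x_0)}$ may be as large as $(1-1/n)t$. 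In other words, $f$ must first be \emph{flattened} (made locally constant) on a neighbourhood of $S$ before the tent can be erected, and one must then verify that the Lipschitz cost $1+\delta/(r-\delta)$ of flattening is absorbed --- in your setting by the slack, in the paper's by the rescaling. This flatten--absorb--tent sequence is precisely Lemma~\ref{lemma:flat} together with the three-stage construction $g_0,g_1,g_2$ of Lemma~\ref{lemma:every_village}; your proposal names the obstacle correctly but does not supply the construction that overcomes it.
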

	The notion of lower porosity appearing in Theorem~\ref{thm:main} is the same as the notion of \cite[Theorems~2.1 and 2.2]{bargetz_dymond2016}, where it is simply called porosity. The present note uses an extended terminology because we also consider other notions of porosity, in particular upper porosity.
	
	Theorem~\ref{thm:main} asserts that after excluding a negligible subset $\mc{N}$ of the space $\mc{M}$ of non-expansive mappings $C\to C$, all remaining mappings $f\in\mc{M}\setminus\mc{N}$ have the property that the set $C\setminus R(f)$ is a negligible subset of $C$. Thus, this statement concerns two types of negligible sets, the former in the space $\mc{M}$ and the latter in $C$. The next theorem reveals that these two classes of negligible sets are intrinsically linked. More precisely, weakening the sense of negligibility of the set $\mc{N}$, and thus excluding a larger, but still negligible subset of $\mc{M}$, achieves a stronger sense of negligibilty of the sets $C\setminus R(f)$ for the remaining mappings $f\in\mc{M}\setminus \mc{N}$. The next theorem further determines a precise relationship between the degree of negligibility of sets $C\setminus R(f)$ for $f\in\mc{M}\setminus\mc{N}$ and the degree of negligibility of $\mc{N}$ as a subset of $\mc{M}$. This dependency is expressed in the form of a pair of gauge functions $(\phi,\hl{\xi})$.
	\begin{thm}\label{thm:dual}
		Let $X$ be a Banach space, $C\subseteq X$ be a closed, bounded, convex, non-singleton, non-empty set and $\mc{M}=\mc{M}(C)$ denote the space of non-expansive mappings $C\to C$ equipped with the supremum metric. Let $K>1$ and $\phi,\hl{\xi}\colon(0,1/K)\to(0,\infty)$ be strictly increasing concave functions satisfying
		\begin{equation*}
			\phi(t)\hl{\xi}(t)\geq \frac{t}{K}\text{ for all $t\in(0,1/K)$}\quad\text{ and }\quad\lim_{t\to 0}\hl{\xi}(t)=0.
		\end{equation*}
		Then there is a $\sigma$-$\hl{\xi}$-lower porous subset $\mc{N}$ of $\mc{M}$ such that for every mapping $f\in \mc{M}\setminus \mc{N}$ the set
		\begin{equation*}
			R(f):=\set{x\in C\colon \lip(f,x)=1}
		\end{equation*}
		is the complement of a $\sigma$-$\phi$-upper porous subset of $C$.
	\end{thm}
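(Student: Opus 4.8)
The plan is to build the exceptional set $\mc{N}$ explicitly by a countable union of sets that are $\hl{\xi}$-lower porous in $\mc{M}$, in parallel with the construction underlying Theorem~\ref{thm:main}, but now carefully tracking two parameters simultaneously: the "depth" of the porosity balls in $\mc{M}$ and the corresponding density of good points in $C$. For each mapping $f\in\mc{M}$, a point $x\in C$ fails to lie in $R(f)$ precisely when there is some $r>0$ and some $\lambda<1$ with $\lip(f|_{B(x,r)})\le\lambda$. Quantifying this, for rationals $r,\lambda$ and $n\in\N$ let $E_{n,r,\lambda}$ be the set of $f\in\mc{M}$ for which the set of $x\in C$ witnessing $\lip(f|_{B(x,r)})\le\lambda$ fails to be $\phi$-upper porous at the relevant scale; the bad set $C\setminus R(f)$ being $\sigma$-$\phi$-upper porous amounts to $f$ avoiding a countable union of such sets. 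The first step is therefore to pin down the correct combinatorial indexing so that (i) the complement of $R(f)$ is automatically $\sigma$-$\phi$-upper porous whenever $f\notin\mc{N}$, and (ii) each piece of $\mc{N}$ is amenable to the porosity estimate below.

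The second and central step is the porosity estimate in $\mc{M}$. Fix one of the pieces $\mc{N}_i$ and a mapping $g\in\mc{M}$; I must produce, at every sufficiently small scale $\varepsilon>0$, a ball $B(h,\hl{\xi}(\varepsilon)\varepsilon/2)\subseteq B(g,\varepsilon)$ (or with $\hl{\xi}$ in the appropriate place) disjoint from $\mc{N}_i$. The perturbation $h$ is obtained from $g$ by superimposing, at a dense enough net of points $x$ in $C$ and at a suitable spatial scale $\delta=\delta(\varepsilon)$, small "tent"-type bumps in directions along which $C$ has width bounded below — this uses only that $C$ is convex, bounded and non-singleton, so that through a suitable point one finds a segment, giving a one-dimensional direction to push in; non-expansiveness of $h$ is preserved by taking the bumps of size $\le$ (slope)$\times$(base) with slope controlled. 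The effect of a bump of height $\asymp \hl{\xi}(\varepsilon)\varepsilon$ over a base of radius $\asymp\delta$ is to force $\lip(h,x)=1$ at the tip, and, crucially, robustly so: any $h'$ within $\hl{\xi}(\varepsilon)\varepsilon/2$ of $h$ still has local Lipschitz constant close to $1$ at a point near each tip. Choosing the net of bump-centres to be a $\phi(\varepsilon)\varepsilon$-net (so that the tips are $\phi$-dense at scale $\varepsilon$) and using the hypothesis $\phi(t)\hl{\xi}(t)\ge t/K$ to check that bumps of base $\asymp\phi(\varepsilon)\varepsilon$ and height $\asymp\hl{\xi}(\varepsilon)\varepsilon$ have admissible slope $\lesssim 1$, one gets that every $h'\in B(h,\hl{\xi}(\varepsilon)\varepsilon/2)$ has $R(h')$ meeting every ball of radius $\asymp\phi(\varepsilon)\varepsilon$ centred in the relevant region — i.e. $C\setminus R(h')$ is $\phi$-upper porous there — hence $h'\notin\mc{N}_i$. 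This yields $\hl{\xi}$-lower porosity of $\mc{N}_i$, and $\sigma$-$\hl{\xi}$-lower porosity of $\mc{N}=\bigcup_i\mc{N}_i$.

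The concavity of $\phi$ and $\hl{\xi}$ enters to make the scale bookkeeping consistent: concavity gives $\phi(t)/t$ and $\hl{\xi}(t)/t$ decreasing, so estimates proved at one scale survive passing to smaller scales, and it guarantees the bump slopes, which involve ratios like $\hl{\xi}(\varepsilon)\varepsilon/(\phi(\varepsilon)\varepsilon)=\hl{\xi}(\varepsilon)/\phi(\varepsilon)$, stay bounded as $\varepsilon\to0$ (here one also uses $\lim_{t\to0}\hl{\xi}(t)=0$ together with $\phi\hl{\xi}\ge t/K$, which forces $\phi(t)\ge t/(K\hl{\xi}(t))\to\infty$ relative to $t$ in the right way). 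The main obstacle I anticipate is not any single estimate but the simultaneous calibration: the perturbation must be large enough (height $\asymp\hl{\xi}(\varepsilon)\varepsilon$) that a whole $\hl{\xi}(\varepsilon)\varepsilon/2$-ball around it is spoiled for $\mc{N}_i$, yet the induced good points in $C$ must be dense at the finer scale $\phi(\varepsilon)\varepsilon$, and these two requirements are reconciled only through the constraint $\phi(t)\hl{\xi}(t)\ge t/K$ — getting the constants to close up (and handling the geometry of a general convex $C$, which may be "thin" in most directions, by always working along a fixed good segment) is where the real work lies. Once the perturbation and its robustness are in hand, deducing both the $\sigma$-$\hl{\xi}$-lower porosity of $\mc{N}$ and the $\sigma$-$\phi$-upper porosity of $C\setminus R(f)$ for $f\notin\mc{N}$ is routine.
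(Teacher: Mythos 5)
Your overall strategy is the one the paper follows: decompose $\mc{N}$ into countably many pieces, prove each piece is $\xi$-lower porous at every point of $\mc{M}$ via a tent-bump perturbation supported on a separated net in $C$, arrange that the bump survives a further perturbation of calibrated size, and use $\phi(t)\xi(t)\geq t/K$ to reconcile the scale in $\mc{M}$ with the hole size in $C$. However, two of the genuinely hard points are left unresolved. The first is the definition of the pieces of $\mc{N}$: your description of $E_{n,r,\lambda}$ as "the set of $f$ for which \dots fails to be $\phi$-upper porous at the relevant scale" is essentially circular and suppresses the quantifier structure on which everything turns. The paper's pieces $\mc{P}_{\lambda,\mb{\Gamma},\mf{s},\phi,k}$ are defined by a \emph{for every $j\geq k$} condition over a sequence of scales $s_{j}$ constructed so that $\phi^{-1}(s_{j+1})/s_{j+1}=\phi^{-1}(s_{j})/(2s_{j})$; this geometric gradation is what lets an arbitrary $\varepsilon$ in the lower-porosity test be matched to a unique $j$, and the negation ("there exists $j\geq k$ such that every net ball at scale $j$ is a hole") is exactly why one obtains only $\phi$-\emph{upper} porosity of the exceptional sets in $C$ (the good scale $j_{k}$ is uncontrolled), while still getting $\xi$-\emph{lower} porosity in $\mc{M}$. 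Your proposal does not confront this asymmetry. Relatedly, the scales are miscalibrated against the definitions: $\phi$-upper porosity at scale $r$ requires holes of radius $\phi^{-1}(\alpha r)$, not $\phi(r)r$ (these differ badly already for $\phi(t)=\sqrt{t}$), and the ball in $\mc{M}$ must have radius of the form $\xi^{-1}(\beta\varepsilon)$; the hedge "$\xi$ in the appropriate place" leaves unverified precisely the calibration you identify as the crux, and it is here that Lemma~\ref{lemma:phiinv} and the convexity of $\phi^{-1},\xi^{-1}$ are needed.

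The second gap is that "superimposing tent-type bumps" on a non-expansive $g$ does not yield a non-expansive map. If $g$ already realises Lipschitz constant near $1$ close to a net point, adding a bump of any positive slope pushes $\lip$ above $1$; and in any case a bump of slope $<1$ only gives $\lip\leq 1+\text{slope}$, whereas the argument needs the perturbed map to lie in $\mc{M}$ and to satisfy the exact equality $\norm{g(z)-g(x)}=\norm{z-x}$ on a small ball (so that the estimate survives a further perturbation of $h$). The paper's Lemma~\ref{lemma:every_village} resolves this by first \emph{flattening} $g$ to be constant on a ball around each net point (Lemma~\ref{lemma:flat}), then contracting by a factor $1-\delta/r$ to make room inside $C$, and only then installing the genuinely $1$-Lipschitz tent $z\mapsto g_{1}(x)+\norm{z-x}u_{x}$. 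Without these preparatory steps your perturbation leaves $\mc{M}$, and the porosity estimate does not get off the ground.
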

	Full explanations of the different notions of porosity appearing in this paper are given in \hl{S}ubsection~\ref{subsec:porous}. For now it suffices to know that for increasing, concave functions $\hl{\xi}\colon (0,\infty)\to (0,\infty)$ the notions of $\hl{\xi}$-porosity form a hierarchy and that the notions of porosity appearing in Theorem~\ref{thm:dual} are weaker than the standard form of porosity, but stronger than nowhere density (these comparisons need not be strict). A more detailed analysis of Theorem~\ref{thm:dual} is postponed until \hl{S}ubsection~\ref{subsec:thmdual}.
	
	The proofs of Theorems~\ref{thm:main} and \ref{thm:dual} are completely independent of \cite{bargetz_dymond2016}, so the present work may also be of interest as a new approach to answering Reich's question (Question~\ref{q:reich}). It may also be of note that the given answer to Reich's question in \hl{S}ubsection~\ref{subsec:Reich} is entirely elementary in the sense that it does not require knowledge of any advanced theorems. There are no preexisting elementary proofs of the positive answer to Reich's question, even in the restricted Hilbert space setting. Even the proofs of Theorems~\ref{thm:main} and \ref{thm:dual} may be described as almost entirely elementary. The only reason for writing almost entirely instead of entirely in the previous sentence is that in one step the existence of a maximal $s$-separated set is used. This is a consequence of Zorn's lemma.
	\section{Preliminaries and Notation.}
	\subsection{General notation.}
	Given a metric space $(M,d)$, a point $x\in M$ and $r>0$ we let $B_{M}(x,r)$ denote the open ball in $M$ with centre $x$ and radius $r$. For the corresponding closed ball we replace $B_{M}$ with $\cl{B}_{M}$. For $s,\theta>0$ a subset $\Gamma$ of $M$ will be called \emph{$s$-separated} if $d(x,y)\geq s$ for every $x,y\in \Gamma$ and \emph{$\theta$-dense} if $\bigcup_{x\in\Gamma}\cl{B}_{M}(x,\theta)=M$. 
	
	The origin and unit sphere of a normed space $(X,\norm{-})$ will be denoted by $0_{X}$ and $\Sph_{X}$ respectively. Since we only ever work with one fixed norm space, we always write $X$ instead of $(X,\norm{-})$. For a subset $C$ of $X$ we let
	\begin{equation*}
		\diam C:=\sup\set{\norm{y-x}\colon x,y\in C}.
	\end{equation*}
	The quantity $\diam C$ will appear often as an upper bound for $\norm{x}$ for points $x\in C$ when $C$ contains $0_{X}$. Given two points $x,y\in X$ we write $[x,y]$ for the closed line segment with endpoints $x$ and $y$.
	
	We call a function $\phi\colon (0,\infty)\to (0,\infty)$ \emph{increasing} if $\phi(s)\leq \phi(t)$ whenever $s,t\in (0,\infty)$ and $s\leq t$. If the stronger condition holds with the non-strict inequalities replaced by strict inequalities, we refer to $\phi$ as \emph{strictly increasing}. The inverse function of a strictly increasing $\phi$ is denoted by $\phi^{-1}\colon (\inf\phi,\sup\phi)\to (0,\infty)$. For functions $\phi,\hl{\xi}\colon (0,\infty)\to (0,\infty)$ we will make use of the standard big Theta notation with respect to the asymptotic behaviour as $t\to 0$. Thus, we will write $\phi(t)\in \Theta(\hl{\xi}(t))$ to signify the existence of constants $\eta,a,b>0$ such that $a\hl{\xi}(t)\leq \phi(t)\leq b\hl{\xi}(t)$ for all $t\in (0,\eta)$.

	Given a normed space $X$, a subset $C$ of $X$ and a mapping $f\colon C\to X$ we let
	\begin{equation*}
		\lip(f):=\hl{\sup}\set{\frac{\norm{f(y)-f(x)}}{\norm{y-x}}\colon x,y\in C,\,y\neq x}.
	\end{equation*}
	If $\lip(f)<\infty$ then $f$ is called \emph{Lipschitz}, if $\lip(f)\leq 1$ then $f$ is called \emph{non-expansive} and if $\lip(f)<1$ then $f$ is referred to as a \emph{strict contraction}. For $x\in C$ we further distinguish the quantity
	\begin{equation}\label{eq:lipfx}
		\lip(f,x):=\lim_{r\to 0}\sup\set{\frac{\norm{f(y)-f(x)}}{\norm{y-x}}\colon y\in C,\, 0<\norm{y-x}\leq r},
	\end{equation} 
	which may be thought of as the Lipschitz constant of $f$ witnessed locally at the point $x$. Further, for $r\in (0,\infty)$ we let
	\begin{equation*}
		\lip(f,x,r):=\sup\set{\frac{\norm{f(y)-f(x)}}{\norm{y-x}}\colon y\in C,\, 0< \norm{y-x}\leq r}.
	\end{equation*}
	The quantity $\lip(f,x,r)$ can be described as the Lipschitz constant of $f$ witnessed at the point $x$ at scale $r$. Note that 
	\begin{equation*}
		\lim_{r\to 0}\lip(f,x,r)=\lip(f,x).
	\end{equation*}
	The sets $R(f)$ given in Theorems~\ref{thm:main} and \ref{thm:dual} are defined in terms of the quantity $\lip(f,x)$. We record here a basic property of these sets.
	\begin{lemma}\label{lemma:Gdelta}
		Let $X$ be a normed space, $C\subseteq X$ and $f\colon C\to C$ be a non-expansive mapping. Then the set
		\begin{equation*}
			R(f):=\set{x\in C\colon \lip(f,x)=1}
		\end{equation*}
		is a relatively $G_{\delta}$ subset of $C$.
	\end{lemma}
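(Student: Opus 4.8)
The plan is to exhibit $R(f)$ explicitly as a countable intersection of relatively open subsets of $C$. Since $f$ is non-expansive, every difference quotient $\norm{f(y)-f(x)}/\norm{y-x}$ (with $x\neq y$ in $C$) is at most $1$, so $\lip(f,x,r)\le 1$ for all $x\in C$ and all $r>0$, and hence $\lip(f,x)\le 1$ for all $x\in C$. Moreover $r\mapsto\lip(f,x,r)$ is non-decreasing with $\lip(f,x)=\lim_{r\to 0}\lip(f,x,r)$, so the equality $\lip(f,x)=1$ holds if and only if $\lip(f,x,r)=1$ for every $r>0$; by the definition of the supremum this is in turn equivalent to the assertion that for all $k,m\in\N$ there is $y\in C$ with $0<\norm{y-x}\le 1/m$ and $\norm{f(y)-f(x)}>(1-1/k)\norm{y-x}$.

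For $k,m\in\N$ I would therefore set
\begin{equation*}
	U_{k,m}:=\set{x\in C\colon \exists\, y\in C\ \text{ with }\ 0<\norm{y-x}<\tfrac1m\ \text{ and }\ \norm{f(y)-f(x)}>\br*{1-\tfrac1k}\norm{y-x}}.
\end{equation*}
Each $U_{k,m}$ is relatively open in $C$, since it is a union over $y\in C$ of the sets
\begin{equation*}
	\br[\big]{B_{X}\br*{y,\tfrac1m}\cap C}\cap\br[\big]{C\setminus\set{y}}\cap h_{y}^{-1}\br[\big]{(0,\infty)},\qquad h_{y}(x):=\norm{f(y)-f(x)}-\br*{1-\tfrac1k}\norm{y-x},
\end{equation*}
and each of these is relatively open in $C$ because $h_{y}$ is continuous on $C$ ($f$ being Lipschitz, hence continuous) and $B_{X}(y,\tfrac1m)$ is open in $X$.

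It then remains to check that $R(f)=\bigcap_{k,m\in\N}U_{k,m}$, which is pure bookkeeping. For the inclusion $\subseteq$: if $x\in R(f)$ then $\lip\br*{f,x,\tfrac1{m+1}}=1$ for every $m$, so for any $k$ there is $y\in C$ with $0<\norm{y-x}\le\tfrac1{m+1}<\tfrac1m$ and $\norm{f(y)-f(x)}>\br*{1-\tfrac1k}\norm{y-x}$, i.e.\ $x\in U_{k,m}$. For the reverse inclusion: if $x\in\bigcap_{k,m}U_{k,m}$ then for each fixed $m$ and every $k$ there is an admissible $y$ at scale $<\tfrac1m$ with difference quotient exceeding $1-\tfrac1k$, whence $\lip\br*{f,x,\tfrac1m}\ge 1$ and so $\lip\br*{f,x,\tfrac1m}=1$; letting $m\to\infty$ gives $\lip(f,x)=1$, that is, $x\in R(f)$.

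I do not expect any real obstacle here. The single point that requires a moment's attention is the passage to the strictly smaller scale $\tfrac1{m+1}$ in place of $\tfrac1m$ when proving $R(f)\subseteq U_{k,m}$: this is exactly what lets the radius inequality in the definition of $U_{k,m}$ be kept strict, which in turn is what makes those sets relatively open.
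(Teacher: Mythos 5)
Your proof is correct and follows essentially the same route as the paper: both exhibit $R(f)$ as a countable intersection of relatively open subsets of $C$, the paper indexing by rational thresholds $\lambda\in\Q\cap(0,1)$ where you use $1-1/k$, and both relying on the continuity of $f$ and of the norm for openness. Your additional care with the strict radius $<1/m$ versus the non-strict $\le 1/(m+1)$ is exactly the right bookkeeping point, and the rest checks out.
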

	\begin{proof}
		It suffices to observe that the set $R(f)$ may be written as
		\begin{equation*}
			\bigcap_{\lambda\in \Q\cap (0,1)}\bigcap_{n\in\N}\set{x\in C\colon \exists y\in C\cap B_{X}(x,1/n)\text{ s.t.}\norm{f(y)-f(x)}>\lambda\norm{y-x}}
		\end{equation*} 
		and that each of the sets participating in the above intersection is relatively open in $C$.
	\end{proof}
	\subsection{Spaces of non-expansive mappings.}\label{subsect:nonexp}
	Given a normed space $X$ and a bounded and convex subset $C$ of $X$ we consider the space $\mc{M}(C)$ of non-expansive mappings $C\to C$, defined by
	\begin{equation*}
		\mc{M}(C):=\set{f\colon C\to C\colon \lip(f)\leq 1}.
	\end{equation*}
	The set $\mc{M}(C)$ will be equipped with the supremum metric
	\begin{equation*}
		d_{\infty}(f,g)=\supnorm{g-f},\qquad f,g\in\mc{M}(C).
	\end{equation*}
	If $X$ is a Banach space and $C\subseteq X$ is closed then the pair $(\mc{M}(C),d_{\infty})$ is a complete metric space. In the remainder of the paper we shorten the notation $(\mc{M}(C),d_{\infty})$ to $\mc{M}(C)$ and favour writing $\supnorm{g-f}$ instead of $d_{\infty}(f,g)$.
	\subsection{Porosity and $\sigma$-porosity.}\label{subsec:porous}
	The notions of $\phi$-upper and $\phi$-lower porosity are defined according to \cite[Definition~2.1]{Zajicek05} and \cite[2.7]{zajivcek1976sets}.

	Let $(M,d)$ be a metric space, $P\subseteq M$, $x\in M$, $\eta>0$ and $\phi\colon (0,\eta)\to (0,\infty)$ be an increasing function. For each $r>0$ we consider the quantity
	\begin{equation*}
		\gamma(x,r,P):=\sup\set{s>0 \colon \exists x'\in M\text{ such that }B_{M}(x',s)\subseteq B_{M}(x,r)\setminus P},
	\end{equation*}
	where we interpret the supremum of the empty set as $-\infty$. We say that $P$ is $\phi$-upper porous at the point $x$ if 
	\begin{equation*}
		\limsup_{r\to 0}\frac{\phi(\gamma(x,r,P))}{r}>0.
	\end{equation*}
	We say that $P$ is $\phi$-lower porous at the point $x$ if
	\begin{equation*}
		\liminf_{r\to 0}\frac{\phi(\gamma(x,r,P))}{r}>0.
	\end{equation*}
	If $\gamma(x,r,P)=-\infty$ then the quantity $\phi(\gamma(x,r,P))$ is not defined. Therefore, the conditions above implicitly include the condition $\gamma(x,r,P)>0$ for all $r>0$.
	
	The set $P$ is said to be $\phi$-upper porous if it is $\phi$-upper porous at every point $x\in P$. $\phi$-lower porous sets are defined analogously. 
	
	Note that $\phi$-lower porosity is stronger than $\phi$-upper porosity for every $\phi$. Moreover, the notions of $\phi$-porosity form a hierarchy ordered according to the asymptotic behaviour of $\phi(t)$ as $t\to 0+$. Indeed the notions of $\phi_{1}$-porosity are stronger than the corresponding notions of $\phi_{2}$-porosity whenever $\limsup_{t\to 0}\frac{\phi_{1}(t)}{\phi_{2}(t)}<\infty$. Accordingly, the notions of $\phi_{1}$- and $\phi_{2}$-porosity coincide whenever $\phi_{1}(t)\in \Theta(\phi_{2}(t))$. 
	
	For functions $\phi\in \Theta(1)$, or even those with just $\liminf_{t\to 0}\phi(t)>0$, the notions of $\phi$-upper and $\phi$-lower porosity coincide and equal the standard notion of nowhere density. For functions $\phi\in\Theta(t)$ the notions of $\phi$-upper and $\phi$-lower porosity are precisely the standard notions of upper and lower porosity. When $\phi\in \Theta(t)$ we will write upper porous and lower porous instead of $\phi$-upper porous and $\phi$-lower porous respectively.  
	
	Thus, the hierarchy of notions of $\phi$-upper and $\phi$-lower\hl{-}porosity has its weakest notion, namely nowhere density, at one end and its strongest notion, namely the standard notions of porosity, at the other end. In between we have intermediate notions, corresponding to functions $\phi(t)$ lying asymptotically in between the constant function and the identity for $t\to 0$. Notable examples of such functions are $\phi_{p}(t)=t^{p}$ for $p\in (0,1)$. In fact the notions of $\phi_{p}$-porosity receive special attention in \cite{zajivcek1976sets}.
	
	A stronger property than $\phi$-upper or $\phi$-lower porosity is given when the porosity condition of $P$ is satisfied not just at all points $x\in P$ but at every point $x$ in the whole space $M$. For example the set $P=\set{\frac{1}{n}\colon n\in\Z\setminus\set{0}}$ is porous in $\R$, but it is not porous at the point $0\in\R\setminus P$. We highlight this because all of the sets shown to be porous in the present work will actually be shown to possess the stronger property of being porous at every point of the space.
	
	There are differing conventions in the literature concerning whether upper or lower porosity is taken as the standard form of porosity. For example in \cite{DBM1989porosite}, \cite{reich2001set} and \cite{bargetz_dymond2016} porous means lower porous, whilst in \cite{zajivcek1976sets}, \cite{Zajicek05} and \cite{lindenstrauss2011frechet}  porous means upper porous.

	In the remainder of the paper we will avoid using the $\gamma$ notation in the definitions of porosity above. Instead, we will make use of the following equivalent formulations of the notions of $\phi$-porosity for strictly increasing, concave $\phi$ satisfying $\lim_{t\to 0}\phi(t)=0$. \hl{The characterisations given in Lemma~\ref{lemma:def_porous} can be seen as anologues of classical characterisations of upper and lower porosity.}
	\begin{lemma}\label{lemma:def_porous}
		Let $(M,d)$ be a perfect metric space, $P\subseteq M$, $q\in M$, $\eta>0$ and $\phi\colon (0,\eta)\to (0,\infty)$ be a strictly increasing, concave function with $\lim_{t\to 0}\phi(t)=0$. Then the following statements hold:
		\begin{enumerate}[(a)]
			\item\label{def_upper_por} $P$ is $\phi$-upper porous at $q$ if and only if there exists $\alpha\in (0,1)$ such that for every $\varepsilon>0$ there exists $q'\in M$ such that $0<d(q,q')\leq \varepsilon$ and $B_{M}(q',\phi^{-1}(\alpha d(q,q')))\cap P=\emptyset$.
			\item\label{def_lower_por}$P$ is $\phi$-lower porous at $q$ if and only if there exists $\varepsilon_{0}>0$ and $\beta\in (0,1)$ such that for every $\varepsilon\in (0,\varepsilon_{0})$ there exists $q'\in M$ such that $d(q,q')\leq \varepsilon$ and $B_{M}(q',\phi^{-1}(\beta\varepsilon))\cap P=\emptyset$.
		\end{enumerate}
	\end{lemma}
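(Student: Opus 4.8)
The plan is to reduce both equivalences to a single convenient reformulation of $\phi$-porosity and then to translate between ``a ball inside a ball'' and ``a small empty ball near $q$''.

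First I would record the properties of $\phi^{-1}$ that drive everything. Since $\phi$ is strictly increasing, concave and $\lim_{t\to0}\phi(t)=0$, its inverse $\phi^{-1}$ (defined on $(0,\sup\phi)$) is strictly increasing, convex, and tends to $0$ at $0$; hence $s\mapsto\phi^{-1}(s)/s$ is non-decreasing, so there are $C,s_{0}>0$ with $\phi^{-1}(s)\le Cs$ for all $s\in(0,s_{0})$. Next I would unwind the definitions: since $\gamma(q,r,P)>\rho$ holds precisely when some ball $B_{M}(x',\rho)$ is contained in $B_{M}(q,r)\setminus P$, and since this containment in turn forces $\gamma(q,r,P)\ge\rho$, one sees---restricting throughout to $r$ small enough that all expressions lie in the relevant domains---that $P$ is $\phi$-upper porous at $q$ if and only if there is $c>0$ such that for arbitrarily small $r>0$ there exists $x'\in M$ with $B_{M}(x',\phi^{-1}(cr))\subseteq B_{M}(q,r)\setminus P$, and that $P$ is $\phi$-lower porous at $q$ if and only if the same statement holds for all sufficiently small $r>0$.

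For the forward direction of parts (a) and (b), given such an empty ball $B_{M}(x',\phi^{-1}(cr))\subseteq B_{M}(q,r)\setminus P$ I would simply put $q':=x'$, so that $d(q,q')<r$, take $\alpha$ (respectively $\beta$) to be $\min\{c,1/2\}$, and use monotonicity of $\phi^{-1}$ to obtain $B_{M}(q',\phi^{-1}(\alpha d(q,q')))\subseteq B_{M}(x',\phi^{-1}(cr))$, which misses $P$. This settles part (b), where $q'=q$ is allowed. For part (a) one must also arrange $d(q,q')>0$, which can fail only when the sole available witness is $x'=q$; in that case $B_{M}(q,\phi^{-1}(cr))\cap P=\emptyset$, and this is the only place the perfectness of $M$ is needed: using perfectness together with continuity of $\phi^{-1}$ at $0$, choose $q'$ with $0<d(q,q')$ small enough that $d(q,q')\le\min\{\tfrac12\phi^{-1}(cr),\varepsilon\}$ and $\phi^{-1}(\alpha d(q,q'))\le\tfrac12\phi^{-1}(cr)$, whence $B_{M}(q',\phi^{-1}(\alpha d(q,q')))\subseteq B_{M}(q,\phi^{-1}(cr))$, again disjoint from $P$.

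For the reverse directions, given $q'$ with $d(q,q')$ small and $B_{M}(q',\phi^{-1}(\alpha d(q,q')))\cap P=\emptyset$, I would set $\rho:=d(q,q')$ and $r:=\rho+\phi^{-1}(\alpha\rho)$; the triangle inequality gives $B_{M}(q',\phi^{-1}(\alpha\rho))\subseteq B_{M}(q,r)\setminus P$, while the bound $\phi^{-1}(\alpha\rho)\le C\alpha\rho$ (valid for small $\rho$) yields $r\le(1+C\alpha)\rho$, hence $\phi^{-1}(cr)\le\phi^{-1}(\alpha\rho)$ for the fixed constant $c:=\alpha/(1+C\alpha)>0$; so $B_{M}(q',\phi^{-1}(cr))\subseteq B_{M}(q,r)\setminus P$ with $r\to0$ as $\rho\to0$. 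For part (a) I would feed in the points coming from $\varepsilon=1/n$, producing arbitrarily small such $r$; for part (b) I would, for each small $r$, solve $\rho+\phi^{-1}(\beta\rho)=r$ for $\rho=\rho(r)\to0$ and apply the hypothesis with $\varepsilon=\rho(r)$, producing the required ball for every small $r$; in both cases the reformulation of the second paragraph then completes the proof. I expect the forward direction of part (a) to be the main obstacle: the $\phi$-upper porosity characterisation insists on a genuinely distinct point $q'\neq q$, and reconciling this with a witness centred at $q$ is exactly what forces the perfectness hypothesis (which is why it is absent from the lower-porosity argument); the other point needing care is the uniformity of $c$ in the reverse directions, which is where concavity of $\phi$---via the linear bound $\phi^{-1}(s)\le Cs$ near $0$---is essential, with everything else being routine estimation with the triangle inequality and the monotonicity of $\phi$ and $\phi^{-1}$.
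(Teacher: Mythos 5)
Your proposal is correct and follows essentially the same route as the paper's proof: the linear bound $\phi^{-1}(s)\le Cs$ near $0$ (the paper's constant $K=\sup_t t/\phi(t)$), the triangle-inequality translation between an empty ball near $q$ and a ball inside $B_M(q,r)$ with $r$ comparable to $d(q,q')$, and perfectness used only in the direction ``porosity $\Rightarrow$ condition'' of part (a) to move the witness off $q$ (the paper perturbs the centre using $\phi^{-1}(\alpha r)\le\tfrac12\phi^{-1}(2\alpha r)$ from convexity, where you split into the cases $x'=q$ and $x'\neq q$). The differences are only organisational, e.g.\ your explicit reformulation of porosity as a ball-in-ball statement and solving $\rho+\phi^{-1}(\beta\rho)=r$ in (b) rather than parametrising by $r=(1+K)\varepsilon$.
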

	\begin{proof}
		From the concavity of $\phi$ and $\lim_{t\to 0}\phi(t)=0$ we may conclude that
		\begin{equation*}
			K=K(\phi):=\sup\set{\frac{t}{\phi(t)}\colon t\in (0,\eta)}<\infty.
		\end{equation*}
		Moreover, since $\phi\colon (0,\eta)\to (0,\infty)$ is strictly increasing with $\lim_{t\to 0}\phi(t)=0$, we have that the inverse function $\phi^{-1}$ exists and is defined on the interval $(0,\sup \phi)$.
		
		We begin by proving \eqref{def_upper_por}. Let $\alpha\in (0,1)$ be given by the condition of \eqref{def_upper_por} and fix $\varepsilon>0$. Then there exists $q'\in M$ with $0<d(q,q')< \min\set{\frac{\varepsilon}{1+K},\frac{\sup\phi}{\alpha}}$ and $B_{M}(q',\phi^{-1}(\alpha d(q,q')))\cap P=\emptyset$. Note that $\phi^{-1}(\alpha d(q,q'))\leq K\alpha d(q,q')\leq Kd(q,q')$ and so we derive
		\begin{equation*}
			B_{M}(q',\phi^{-1}(\alpha d(q,q')))\subseteq B_{M}(q,(1+K)d(q,q'))\setminus P.
		\end{equation*}
		For $r:=(1+K)d(q,q')$ we now have $\gamma(q,r,P)\geq \phi^{-1}(\alpha d(q,q'))$, $r\in(0,\varepsilon)$ and
		\begin{equation*}
			\frac{\phi(\gamma(q,r,P))}{r}\geq \frac{\alpha d(q,q')}{(1+K)d(q,q')}=\frac{\alpha}{1+K}.
		\end{equation*}
		This proves $\limsup_{r\to 0}\frac{\phi(\gamma(q,r,P))}{r}\geq \frac{\alpha}{1+K}$ and thus the $\phi$-upper porosity of $P$ at $q$. 
		
		Conversely, suppose that $P$ is $\phi$-upper porous at $q$ and set 
		\begin{equation*}
			\alpha':=\limsup_{r\to 0}\frac{\phi(\gamma(q,r,P))}{r}>0 \quad\text{ and }\quad \alpha:=\frac{1}{4}\min\set{\alpha',1}. 
		\end{equation*}
		Fix $\varepsilon>0$. Then we may choose $r\in (0,\varepsilon)$ so that $\frac{\phi(\gamma(q,r,P))}{r}> 2\alpha$, or equivalently, $\gamma(q,r,P)>\phi^{-1}(2\alpha r)$. It follows that there exists $q''\in M$ such that
		\begin{equation*}
			B_{M}(q'',\phi^{-1}(2\alpha r))\subseteq B_{M}(q,r)\setminus P.
		\end{equation*}
		Due to the perfectness of $M$, we may choose $q'\in B_{M}(q'',\phi^{-1}(\alpha r))\setminus\set{q}$. Moreover, from the convexity of $\phi^{-1}$ and $\lim_{t\to 0}\phi^{-1}(t)=0$, we get $\phi^{-1}(\alpha r)\leq \frac{1}{2}\phi^{-1}(2\alpha r)$. Hence, $0<d(q,q')<r<\varepsilon$ and $B_{M}(q',\phi^{-1}(\alpha r))\cap P=\emptyset$. This verifies the $(\alpha,\varepsilon)$-condition of \eqref{def_upper_por}. 
		
		We turn now to \eqref{def_lower_por}. Assume first that there exist $\varepsilon_{0}>0$ and $\beta\in (0,1)$ as in the condition of \eqref{def_lower_por}. We may assume that $\varepsilon_{0}<\sup\phi/\beta$. Let $\varepsilon\in (0,\varepsilon_{0})$ and then choose $q'\in M$ according to the condition of \eqref{def_lower_por}. Since $\phi^{-1}(\beta \varepsilon)\leq K\beta\varepsilon\leq K\varepsilon$, we deduce that 
		\begin{equation*}
			B_{M}(q',\phi^{-1}(\beta\varepsilon))\subseteq B_{M}(q,(1+K)\varepsilon)\setminus P.
		\end{equation*}
		Therefore $\gamma(q,(1+K)\varepsilon,P)\geq \phi^{-1}(\beta\varepsilon)$ and
		\begin{equation*}
			\frac{\phi(\gamma(q,(1+K)\varepsilon,P))}{(1+K)\varepsilon}\geq \frac{\beta \varepsilon}{(1+K)\varepsilon}=\frac{\beta}{1+K}.
		\end{equation*}
		Since we have verified the above inequality for an arbitrary $\varepsilon\in(0,\varepsilon_{0})$, we conclude that $\liminf_{r\to 0}\frac{\phi(\gamma(q,r,P))}{r}\geq \frac{\beta}{1+K}>0$. Hence, $P$ is $\phi$-lower porous at $q$.
		
		Conversely, suppose that $P$ is $\phi$-lower porous at $q$ and set
		\begin{equation*}
			\beta':=\liminf_{r\to 0+}\frac{\phi(\gamma(q,r,P))}{r}>0\quad\text{ and }\quad \beta:=\frac{1}{2}\min\set{\beta',1}.
		\end{equation*}
		Now we may choose $\varepsilon_{0}>0$ sufficiently small so that 
		\begin{equation*}
			\frac{\phi(\gamma(q,r,P))}{r}>\beta \qquad \text{ for all }r\in (0,\varepsilon_{0}).
		\end{equation*}
		Let $\varepsilon\in (0,\varepsilon_{0})$. Then $\frac{\phi(\gamma(q,\varepsilon,P))}{\varepsilon}>\beta$, or equivalently, $\gamma(q,\varepsilon,P)>\phi^{-1}(\beta\varepsilon)$, so we may find $q'\in M$ with 
		\begin{equation*}
			B_{M}(q',\phi^{-1}(\beta\varepsilon))\subseteq B_{M}(q,\varepsilon)\setminus P.
		\end{equation*}
		Now we have $d(q,q')\leq \varepsilon$ and $B_{M}(q',\phi^{-1}(\beta\varepsilon))\cap P=\emptyset$. This verifies the $(\varepsilon_{0},\beta)$-condition of \eqref{def_lower_por}.
	\end{proof}

	\subsection{Discussion of Theorem~\ref{thm:dual}}\label{subsec:thmdual}
	Theorem~\ref{thm:dual} actually contains Theorem~\ref{thm:main}. Indeed, for the choice $\hl{\xi}(t)=\frac{t}{1+t^{p}}$ and $\phi(t)=1+t^{p}$ with $p\in (0,1)$, we have that $\hl{\xi}(t) \in \Theta(t)$ and $\phi(t)\in\Theta(1)$. Therefore, $\hl{\xi}$- and $\phi$-porosity become simply porosity and nowhere density respectively, so that Theorem~\ref{thm:dual} with these choices of $\hl{\xi}$ and $\phi$ is precisely Theorem~\ref{thm:main}. The proof of Theorem~\ref{thm:dual} distinguishes the special case of Theorem~\ref{thm:main}. Therefore, we provide a proof of Theorem~\ref{thm:main} separately to that of Theorem~\ref{thm:dual}.
	
	Due to the antisymmetric conditions on $\phi$ and $\hl{\xi}$ in Theorem~\ref{thm:dual}, there is no admissible choice of $\phi$ and $\hl{\xi}$ which delivers the natural dual of Theorem~\ref{thm:main}: we are not able to assert that the sets $C\setminus R(f)$ are $\sigma$-upper porous for all $f\in\mc{M}$ outside of a nowhere dense set. In contrast, for any weaker form of $\phi$-upper porosity there is an admissible choice of $\hl{\xi}$ so that the sets $C\setminus R(f)$ in the conclusion of Theorem~\ref{thm:dual} are $\sigma$-$\phi$-upper porous. This is established by Lemma~\ref{lemma:pairs} and Corollary~\ref{cor:all_weaker_phi} below. Theorem~\ref{thm:dual} therefore invites the following open question:
	\begin{quest}\label{quest:dual}[Is the dual statement to Theorem~\ref{thm:main} valid?]
		Let $X$ be a Banach space, $C\subseteq X$ be a closed, bounded, convex, non-singleton, non-empty set and $\mc{M}=\mc{M}(C)$ denote the space of non-expansive mappings $C\to C$ equipped with the supremum metric. For $f\in\mc{M}$, let 
		\begin{equation*}
			R(f):=\set{x\in C\colon \lip(f,x)=1}.
		\end{equation*} Does there exist a residual subset $\mc{G}$ of $\mc{M}$ so that for all $f\in \mc{G}$ the set $C\setminus R(f)$ is $\sigma$-upper porous?
	\end{quest}
	\begin{lemma}\label{lemma:pairs}
		Let $\eta\in(0,1)$ and $\phi\colon (0,\eta)\to (0,\infty)$ be a strictly increasing, concave function with 
		\begin{equation*}
			\qquad\lim_{t\to 0}\frac{\phi(t)}{t}=\infty.
		\end{equation*}
		Then there exist $K>1/\eta$ and a strictly increasing, concave function $\hl{\xi}\colon (0,1/K)\to (0,\infty)$ such that 
		\begin{equation*}
			\frac{t}{K}\leq \phi(t)\hl{\xi}(t)\leq Kt \quad\text{ for all }t\in (0,1/K),\quad\text{ and }\quad \lim_{t\to 0}\hl{\xi}(t)=0.
		\end{equation*}	 
	\end{lemma}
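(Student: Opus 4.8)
The plan is to construct $\hl{\xi}$ from the function $\psi(t):=t/\phi(t)$ on $(0,\eta)$. This $\psi$ already satisfies $\phi(t)\psi(t)=t$ exactly, and $\lim_{t\to0}\psi(t)=0$ is precisely the hypothesis $\phi(t)/t\to\infty$; the only obstructions are that $\psi$ need not be concave nor strictly increasing. The whole point is that these obstructions are mild: I would first record that $\psi$ is non-decreasing and that $t\mapsto\psi(t)/t$ is non-increasing. The latter is immediate since $\psi(t)/t=1/\phi(t)$ and $\phi$ is increasing. For the former, extend $\phi$ by $\phi(0):=\lim_{s\to0^+}\phi(s)\in[0,\infty)$; then concavity makes the chord slope $(\phi(t)-\phi(0))/t$ non-increasing, and adding the non-increasing quantity $\phi(0)/t\ge0$ shows $\phi(t)/t$ is non-increasing, so $\psi=1/(\phi(t)/t)$ is non-decreasing. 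Setting $\psi(0):=0$ gives a bounded, non-decreasing, continuous function on $[0,\eta)$ whose ratio $\psi(t)/t$ is non-increasing.

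Next I would fix any $K>\max\{2,1/\eta\}$, restrict to the interval $[0,1/K]\subseteq[0,\eta)$, and let $\hat\psi$ be the least concave majorant of $\psi$ there (this exists because the concave majorants of a bounded function form a nonempty family that is stable under taking pointwise infima, so the infimum of the whole family is again a concave majorant). Using the standard description $\hat\psi(t)=\sup\bigl\{\sum_i\lambda_i\psi(u_i):\lambda_i\ge0,\ \sum_i\lambda_i=1,\ \sum_i\lambda_iu_i=t,\ u_i\in[0,1/K]\bigr\}$, I would check three things: $\hat\psi(0)=0$ (only combinations supported at $0$ have barycentre $0$); $\hat\psi$ is non-decreasing (given a combination with barycentre $t_1<t_2\le1/K$, mixing it with the unit mass at $1/K$ moves the barycentre to $t_2$ without lowering the value, since $\sum_i\lambda_i\psi(u_i)\le\psi(1/K)$); and, crucially, $\hat\psi(t)\le2\psi(t)$ for all $t\in(0,1/K]$. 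For the last estimate, take any admissible combination at $t$, split the indices according to $u_i<t$, $u_i=t$, $u_i>t$, bound $\psi(u_i)\le\psi(t)$ on the first two groups, and bound $\psi(u_i)=u_i\cdot\psi(u_i)/u_i\le u_i\cdot\psi(t)/t$ on the third; summing gives $\sum_i\lambda_i\psi(u_i)\le\psi(t)+(\psi(t)/t)\sum_i\lambda_iu_i=2\psi(t)$. This is the only place where both monotonicity properties of $\psi$ are spent.

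Finally I would put $c:=(K-2)/\phi(1/K)>0$ and define $\hl{\xi}(t):=\hat\psi(t)+ct$ for $t\in(0,1/K)$. Then $\hl{\xi}$ is concave (concave plus linear), strictly increasing (non-decreasing plus strictly increasing), takes values in $(0,\infty)$, and $0<\hl{\xi}(t)\le2\psi(t)+ct\to0$ as $t\to0$. For the product, $\phi(t)\hl{\xi}(t)=\phi(t)\hat\psi(t)+ct\,\phi(t)\ge\phi(t)\psi(t)=t\ge t/K$ gives the lower bound, while $\phi(t)\hat\psi(t)\le2\phi(t)\psi(t)=2t$ together with $ct\,\phi(t)\le ct\,\phi(1/K)=(K-2)t$ (using that $\phi$ is increasing) gives $\phi(t)\hl{\xi}(t)\le2t+(K-2)t=Kt$. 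Since $K>1/\eta$, all the requirements are met.

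The part I expect to be the crux is recognising that $\phi$ being both increasing and concave forces $\psi=t/\phi$ to lie within a bounded factor of a concave function --- concretely, the estimate $\hat\psi\le2\psi$ above. Everything else is bookkeeping: the monotonicity facts about $\psi$, the existence and monotonicity of the least concave majorant, and the linear correction $ct$, which is calibrated exactly so that the perturbation $ct\,\phi(t)$ never exceeds $(K-2)t$ on the shrunken interval $(0,1/K)$.
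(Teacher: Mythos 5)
Your proof is correct, and it rests on the same core idea as the paper's — taking $\xi$ to be (a concavification of) $t/\phi(t)$ — but your execution is genuinely more self-contained and differs in three ways. First, where the paper invokes \cite[Lemma~1]{peetre1970concave} to compare $t/\hat{\phi}(t)$ with its least concave majorant, you prove the factor-$2$ bound $\hat{\psi}\leq 2\psi$ directly from the convex-combination description of the majorant, spending exactly the two monotonicity properties ($\psi$ non-decreasing, $\psi(t)/t$ non-increasing) that make $\psi$ quasi-concave; this is in effect a proof of the cited lemma, so nothing is lost and the argument becomes elementary. Second, the paper secures strict monotonicity of $\xi$ by shrinking the interval until $\xi'>0$, whereas you add the linear correction $ct$ with $c=(K-2)/\phi(1/K)$, calibrated so that the perturbation contributes at most $(K-2)t$ to the product $\phi(t)\xi(t)$; this also makes the paper's separate case $\lim_{t\to 0}\phi(t)>0$ (handled there by $\xi(t)=t$) unnecessary, since your construction is uniform in both cases. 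Third, you work on the bounded interval $[0,1/K]$ from the outset instead of extending $\phi$ linearly to $(0,\infty)$. All the individual steps check out: the chord-slope argument for the monotonicity of $\psi$, the value $\hat{\psi}(0)=0$ and the monotonicity of the least concave majorant, the splitting argument for $\hat{\psi}\leq 2\psi$, and both product bounds $t\leq\phi(t)\xi(t)\leq Kt$.
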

	\begin{proof}
		If $\inf\phi=\lim_{t\to 0}\phi(t)>0$ it suffices to choose $K>\max\set{\frac{1}{\eta},\frac{1}{\inf \phi},\sup\phi}$ arbitrarily and take $\hl{\xi}(t)=t$. Therefore, we may assume that $\lim_{t\to 0}\phi(t)=0$. 
		
		Choose $t_{0}\in (0,\eta)$ such that $\phi$ is differentiable at $t_{0}$. Then we may define a strictly increasing, concave function $\hat{\phi}\colon (0,\infty)\to (0,\infty)$ by
		\begin{equation*}
			\hat{\phi}(t)=\begin{cases}
				\phi(t) & \text{ if }t\in (0,t_{0}),\\
				\phi(t_{0})+\phi'(t_{0})(t-t_{0}) & \text{ if }t\in[t_{0},\infty).
			\end{cases}
		\end{equation*}
		Note that the function $\frac{t}{\hat{\phi}(t)}$ is increasing, because $\hat{\phi}$ is concave and satisfies $\lim_{t\to 0}\hat{\phi}(t)=0$. It follows that
		\begin{equation*}
			\frac{s}{\hat{\phi}(s)}\leq \max\set{1,\frac{s}{t}}\frac{t}{\hat{\phi}(t)}
		\end{equation*}
		for all $s,t\in (0,\infty)$. Let $\hl{\xi}\colon (0,\infty)\to (0,\infty)$ denote the least concave majorant of the function $\frac{t}{\hat{\phi}(t)}$. Then, by \cite[Lemma~1]{peetre1970concave}, there is $L>0$ such that 
		\begin{equation*}
			\frac{t}{L\hat{\phi}(t)}\leq \hl{\xi}(t)\leq \frac{Lt}{\hat{\phi}(t)}
		\end{equation*}
		for all $t\in (0,\infty)$, so that $\lim_{t\to 0}\hl{\xi}(t)=0$. It remains to choose $K>\max\set{L,1/t_{0}}$ sufficiently large so that $\hl{\xi}'(t)>0$ for all $t\in (0,1/K)$.
	\end{proof}
	The next corollary of Theorem~\ref{thm:dual} and Lemma~\ref{lemma:pairs} highlights the pertinence of Question~\ref{quest:dual}. The only gauge functions $\phi$ not captured by it are those for which $\phi$-upper porosity is precisely upper porosity. 
	\begin{cor}\label{cor:all_weaker_phi}
		Let $X$ be a Banach space, $C\subseteq X$ be a closed, bounded, convex, non-singleton, non-empty set and $\mc{M}=\mc{M}(C)$ denote the space of non-expansive mappings $C\to C$ equipped with the supremum metric. For $f\in\mc{M}$, let 
		\begin{equation*}
			R(f):=\set{x\in C\colon \lip(f,x)=1}.
		\end{equation*}
		Let $\eta\in (0,1)$ and let $\phi\colon (0,\eta)\to (0,\infty)$ be a strictly increasing, concave function with 
		\begin{equation*}
			\lim_{t\to 0}\frac{\phi(t)}{t}=\infty.
		\end{equation*}
		Then there is a residual subset $\mc{G}$ of $\mc{M}$ such that $C\setminus R(f)$ is $\sigma$-$\phi$-upper porous for every $f\in \mc{G}$. 
	\end{cor}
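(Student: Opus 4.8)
The plan is to read Corollary~\ref{cor:all_weaker_phi} off Theorem~\ref{thm:dual}, once Lemma~\ref{lemma:pairs} has supplied a suitable companion gauge $\xi$ for the given $\phi$; the only work beyond citing these two results is the routine observation that a $\sigma$-$\xi$-lower porous subset of a Baire space is meagre.

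First I would apply Lemma~\ref{lemma:pairs} to $\phi$, whose hypotheses match those of the corollary verbatim. This yields a constant $K>1/\eta$ and a strictly increasing, concave function $\xi\colon(0,1/K)\to(0,\infty)$ with
\[
	\frac{t}{K}\le \phi(t)\,\xi(t)\le Kt\quad\text{for all }t\in(0,1/K),\qquad\text{and}\qquad \lim_{t\to 0}\xi(t)=0.
\]
Since $K>1/\eta$ we have $(0,1/K)\subseteq(0,\eta)$, so $\phi$ restricts to a strictly increasing, concave function on $(0,1/K)$, and $K>1/\eta>1$ because $\eta<1$. Hence $K$, the restriction $\phi|_{(0,1/K)}$, and $\xi$ satisfy every hypothesis of Theorem~\ref{thm:dual}: the inequality $\phi(t)\xi(t)\ge t/K$ is the left-hand bound just obtained, and $\lim_{t\to 0}\xi(t)=0$ holds by construction.

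Second, I would invoke Theorem~\ref{thm:dual} with these data to obtain a $\sigma$-$\xi$-lower porous set $\mathcal{N}\subseteq\mathcal{M}$ such that, for every $f\in\mathcal{M}\setminus\mathcal{N}$, the set $R(f)$ is the complement in $C$ of a $\sigma$-$\phi$-upper porous set. Since $\phi$-upper porosity depends only on the behaviour of $\phi$ near $0$, passing from the restricted gauge back to the original $\phi\colon(0,\eta)\to(0,\infty)$ changes nothing, so $C\setminus R(f)$ is $\sigma$-$\phi$-upper porous in the sense of the corollary. It then remains to set $\mathcal{G}:=\mathcal{M}\setminus\mathcal{N}$ and to verify that $\mathcal{G}$ is residual. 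As $X$ is a Banach space and $C$ is closed, $\mathcal{M}$ is a complete metric space, hence a Baire space, so it suffices that $\mathcal{N}$ be meagre; and this holds because, as recorded in Subsection~\ref{subsec:porous}, every $\xi$-lower porous set is nowhere dense. (If one wants this spelled out, apply the characterisation of Lemma~\ref{lemma:def_porous}\,\eqref{def_lower_por} at the points of a $\xi$-lower porous set $P$: every nonempty open ball then contains a nonempty open ball disjoint from $P$, so $\overline{P}$ has empty interior.) A countable union of nowhere dense sets is meagre, so $\mathcal{N}$ is meagre and $\mathcal{G}$ is residual.

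I do not expect any genuine obstacle here: the whole content of the corollary is already packaged in Theorem~\ref{thm:dual} and Lemma~\ref{lemma:pairs}, and combining them is essentially formal. The single step that merits a careful sentence is the passage from $\sigma$-$\xi$-lower porosity to meagreness, which relies on nowhere density occupying the weak end of the $\xi$-porosity hierarchy together with $\mathcal{M}$ being a Baire space.
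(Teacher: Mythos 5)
Your proposal is correct and follows the paper's own proof exactly: apply Lemma~\ref{lemma:pairs} to produce $K$ and $\xi$, then feed the pair $(\phi,\xi)$ on $(0,1/K)$ into Theorem~\ref{thm:dual}. The extra details you supply (that $(0,1/K)\subseteq(0,\eta)$ and that a $\sigma$-$\xi$-lower porous set is meagre, so its complement is residual) are left implicit in the paper but are exactly the right justifications.
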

	\begin{proof}
		Let $K>1$ and $\hl{\xi}$ be given by the conclusion of Lemma~\ref{lemma:pairs}. Then apply Theorem~\ref{thm:dual} to the pair of functions $\phi,\hl{\xi}\colon (0,1/K)\to (0,\infty)$.
	\end{proof}

	\section{Perturbation of Banach space Lipschitz mappings.}\label{sec:perturb}
	The present section details some methods of modifying a given non-expansive mapping of a normed space, so that its Lipschitz constant on targeted sets increases. These methods will be employed in the next section to establish the main porosity results.
	\begin{lemma}\label{lemma:flat}
		Let $X$ be a normed space, $C\subseteq X$ be a convex set containing $0_{X}$ and $0<\delta<r$. Then there exists a mapping $\Phi\colon C\to C$ such that
		\begin{enumerate}[(i)]
			\item\label{Phi1} $\Phi(x)=0_{X}$ for all $x\in C\cap \cl{B}_{X}(0_{X},\delta)$.
			\item\label{Phi2} $\Phi(x)=x$ for all $x\in C\setminus B_{X}(0_{X},r)$.
			\item\label{Phi3} $\lip(\Phi)\leq 1+\frac{\delta}{r-\delta}$.
			\item\label{Phi4} $\supnorm{\Phi-\id_{C}}\leq \delta$.
		\end{enumerate}
	\end{lemma}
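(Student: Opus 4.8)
The plan is to take $\Phi$ to be a \emph{radial contraction}, $\Phi(x)=\lambda(\norm{x})\,x$, for a carefully chosen scalar profile $\lambda\colon[0,\infty)\to[0,1]$. Since $C$ is convex and contains $0_{X}$, the segment $[0_{X},x]$ lies in $C$ for every $x\in C$, and $\Phi(x)$ is a convex combination of $0_{X}$ and $x$; hence $\Phi$ maps $C$ into $C$ with no further work. Concretely, I would set $\lambda(t)=0$ for $t\in[0,\delta]$, $\lambda(t)=1$ for $t\in[r,\infty)$, and, on the annulus $[\delta,r]$, choose $\lambda$ so that the \emph{radial profile} $g(t):=\lambda(t)\,t$ is the affine function joining $(\delta,0)$ to $(r,r)$, namely $g(t)=\frac{r}{r-\delta}(t-\delta)$, equivalently $\lambda(t)=\frac{r}{r-\delta}\bigl(1-\tfrac{\delta}{t}\bigr)$. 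A direct check confirms that $\lambda$ is continuous and increasing with values in $[0,1]$, that $g$ is continuous and piecewise linear with slopes $0$, $\frac{r}{r-\delta}$, $1$ on the three pieces, so that $\lip(g)=\frac{r}{r-\delta}=1+\frac{\delta}{r-\delta}$, and that $\max\set{0,\,t-\delta}\le g(t)\le t$ for all $t\ge 0$.

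With this choice, properties \eqref{Phi1}, \eqref{Phi2} and \eqref{Phi4} are immediate. Indeed $\lambda\equiv 0$ on $[0,\delta]$ forces $\Phi\equiv 0_{X}$ on $C\cap\cl{B}_{X}(0_{X},\delta)$, and $\lambda\equiv 1$ on $[r,\infty)$ forces $\Phi=\id$ on $C\setminus B_{X}(0_{X},r)$; and for every $x\in C$ we have $\norm{\Phi(x)-x}=(1-\lambda(\norm{x}))\norm{x}=\norm{x}-g(\norm{x})\le\delta$, using the lower bound $g(t)\ge t-\delta$. (The upper bound $g(t)\le t$ re-confirms $\norm{\Phi(x)}\le\norm{x}$, consistent with $\Phi(x)\in[0_{X},x]\subseteq C$.)

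The only genuine computation is \eqref{Phi3}. Fix $x,y\in C$; we may assume $a:=\norm{x}\ge\norm{y}=:b$, and the case $b=0$ is trivial since then $\norm{\Phi(x)-\Phi(y)}=g(a)\le a=\norm{x-y}$. Starting from
\begin{equation*}
\Phi(x)-\Phi(y)=\lambda(a)(x-y)+(\lambda(a)-\lambda(b))\,y,
\end{equation*}
and using $0\le\lambda(a)\le 1$ together with $\lambda(a)\ge\lambda(b)$ (monotonicity of $\lambda$), the triangle inequality gives $\norm{\Phi(x)-\Phi(y)}\le\lambda(a)\norm{x-y}+(\lambda(a)-\lambda(b))\,b$. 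I would then use the algebraic identity $(\lambda(a)-\lambda(b))\,b=(g(a)-g(b))-\lambda(a)(a-b)$ to rewrite the right-hand side as $\lambda(a)\bigl(\norm{x-y}-(a-b)\bigr)+(g(a)-g(b))$. Since $\norm{x-y}\ge|a-b|=a-b$ and $\lambda(a)\le 1$, this is at most $\bigl(\norm{x-y}-(a-b)\bigr)+\lip(g)\,(a-b)=\norm{x-y}+\frac{\delta}{r-\delta}(a-b)$, and one last use of $a-b\le\norm{x-y}$ yields $\norm{\Phi(x)-\Phi(y)}\le\bigl(1+\frac{\delta}{r-\delta}\bigr)\norm{x-y}$, which is \eqref{Phi3}.

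The step I expect to be the real obstacle is getting the constant in \eqref{Phi3} \emph{sharp}: a crude estimate bounds $\lip(\Phi)$ by $1+\lip(g)=2+\frac{\delta}{r-\delta}$, which is far too large. The fix is to retain, rather than discard, the cross term $-\lambda(a)(a-b)$ when re-expressing $(\lambda(a)-\lambda(b))\,b$, so that it cancels the corresponding part of $\lambda(a)\norm{x-y}$; this collapses the whole estimate onto the one-dimensional inequality $g(a)-g(b)\le\lip(g)(a-b)$, and it is exactly here that the affine choice of $g$ on $[\delta,r]$ — which keeps $\lip(g)$ down to $\frac{r}{r-\delta}$ while still ensuring $g(t)\ge t-\delta$ — is what makes the argument work.
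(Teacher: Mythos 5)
Your proposal is correct and is essentially the paper's own proof: the map $\Phi(x)=\lambda(\norm{x})x$ with $\lambda(t)=\frac{r}{r-\delta}\bigl(1-\frac{\delta}{t}\bigr)$ on the annulus is exactly the paper's formula, and the Lipschitz estimate rests on the same decomposition $\Phi(x)-\Phi(y)=\lambda(a)(x-y)+(\lambda(a)-\lambda(b))y$. The only (harmless) difference is bookkeeping: you run one global estimate via the radial profile $g$ and its Lipschitz constant, whereas the paper reduces to the annulus case by partitioning line segments and exploits that the two coefficients sum to $\frac{r}{r-\delta}$.
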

	\hl{\begin{remark*}
			The mapping $\Phi$ is similar to the mapping $R_{\varepsilon}$ considered independently in \cite[Lemma~4.2]{medjic2022successive}.
	\end{remark*}}
	\begin{proof}
		We define $\Phi\colon C\to C$ by
		\begin{equation*}
			\Phi(x)=\begin{cases}
				0_{X} & \text{ if }x\in C\cap \cl{B}_{X}(0_{X},\delta),\\
				\left(\frac{r-\frac{r\delta}{\norm{x}}}{r-\delta}\right)x & \text{ if }x\in C\cap B_{X}(0_{X},r)\setminus \cl{B}_{X}(0_{X},\delta),\\
				x & \text{ if } x\in C\setminus B_{X}(0_{X},r).
			\end{cases}
		\end{equation*}
		Observe that $\Phi(x)$ is always a convex combination of $x\in C$ and $0_{X}\in C$. Thus, it is clear that $\Phi$ is a well-defined mapping $C\to C$.
		We verify the conditions \eqref{Phi1}--\eqref{Phi4} for $\Phi$. Conditions \eqref{Phi1} and \eqref{Phi2} are already apparent. For \eqref{Phi3} we note first that $\Phi$ is clearly continuous. Moreover, any line segment in $C$ is partitioned into at most $5$ line segments by the subsets of $C$ distinguished in the above formula for $\Phi$. It therefore suffices to check that the Lipschitz constant of $\Phi$ restricted to each of these subsets is at most $1+\frac{\delta}{r-\delta}$. The only subset for which this is not immediately clear is $C\cap B_{X}(0_{X},r)\setminus \cl{B}_{X}(0_{X},\delta)$. Thus, we consider arbitrary $x,y \in C\cap B_{X}(0_{X},r)\setminus \cl{B}_{X}(0_{X},\delta)$ and observe that
		\begin{multline*}
			\norm{\Phi(y)-\Phi(x)}=\norm{\left(\frac{r-\frac{r\delta}{\norm{y}}}{r-\delta}\right)(y-x)+x\left(\frac{r-\frac{r\delta}{\norm{y}}}{r-\delta}-\frac{r-\frac{r\delta}{\norm{x}}}{r-\delta}\right)}\leq\\
			\left(\frac{r-\frac{r\delta}{\norm{y}}}{r-\delta}\right)\norm{y-x}+\frac{\frac{r\delta}{\norm{y}}\abs{\norm{y}-\norm{x}}}{(r-\delta)}\leq\left(1+\frac{\delta}{r-\delta}\right)\norm{y-x}.  
		\end{multline*}
		This completes the proof of \eqref{Phi3}. To prove \eqref{Phi4} we fix $x\in C$ arbitrarily and distinguish two cases: If $x\in C\cap \cl{B}_{X}(0_{X},\delta)$ or $x\in C\setminus B_{X}(0_{X},r)$ we clearly have $\norm{\Phi(x)-x}\leq \delta$. In the remaining case, $x\in C\cap B_{X}(0_{X},r)\setminus \cl{B}_{X}(0_{X},\delta)$, we get
		\begin{equation*}
			\norm{\Phi(x)-x}=\frac{\delta(r-\norm{x})}{r-\delta}\leq \delta.
		\end{equation*}
		This proves \eqref{Phi4} and completes the proof of the lemma.	
	\end{proof}	
	
	\begin{lemma}\label{lemma:tentpeg}
		Let $X$ be a normed space, $s\in (0,\infty)$ and $C\subseteq X$ be a convex set with $\diam C\geq s$. Then for every $z\in C$ there exists $e_{z}\in \Sph_{X}$ such that the line segment $\left[z,z+\frac{s}{3}e_{z}\right]$ is a subset of $C$. 
	\end{lemma}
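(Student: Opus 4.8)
The plan is to use the hypothesis $\diam C\geq s$ to locate two points of $C$ at distance greater than $\frac{2s}{3}$, and then connect $z$ to whichever of these two points is far enough from $z$, relying on convexity of $C$.

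First I would fix, via the definition of $\diam C$ applied with error $\frac{s}{3}$, two points $x,y\in C$ with $\norm{y-x}>\diam C-\frac{s}{3}\geq \frac{2s}{3}$. Given $z\in C$, the triangle inequality $\norm{y-x}\leq \norm{x-z}+\norm{z-y}$ forces $\max\set{\norm{x-z},\norm{z-y}}\geq \frac{1}{2}\norm{y-x}\geq \frac{s}{3}$. Without loss of generality assume $\norm{x-z}\geq \frac{s}{3}$; in particular $x\neq z$, so $e_{z}:=\frac{x-z}{\norm{x-z}}$ is a well-defined point of $\Sph_{X}$.

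Finally I would verify $\left[z,z+\frac{s}{3}e_{z}\right]\subseteq C$. A typical point of this segment is $z+te_{z}$ with $t\in\left[0,\frac{s}{3}\right]$, and since $\norm{x-z}\geq \frac{s}{3}\geq t$ we may write $z+te_{z}=(1-\lambda)z+\lambda x$ with $\lambda:=\frac{t}{\norm{x-z}}\in[0,1]$. Convexity of $C$ then yields $z+te_{z}\in C$, completing the argument.

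There is no genuine obstacle here. The only point requiring a little care is that $\diam C\geq s$ guarantees only pairs of points at distance arbitrarily close to, but possibly not attaining, $s$; this is why we content ourselves with distance $>\frac{2s}{3}$ rather than $\geq s$. The constant $\frac{1}{3}$ appearing in the statement (rather than, say, $\frac{1}{2}$) is precisely what provides the slack: halving $\frac{2s}{3}$ gives $\frac{s}{3}$, which is exactly the length that keeps $\lambda\leq 1$.
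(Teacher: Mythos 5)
Your proof is correct and follows essentially the same route as the paper's: both fix two points of $C$ at distance greater than $\frac{2s}{3}$ and, for each $z$, take $e_{z}$ pointing towards whichever of them lies at distance at least $\frac{s}{3}$ from $z$, so that the required segment sits inside a chord of $C$ by convexity. The only cosmetic difference is that the paper packages the choice as an explicit case distinction defining the map $z\mapsto e_{z}$, whereas you obtain it from the triangle inequality and a ``without loss of generality''.
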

	\begin{proof}
		Fix $v,w\in C$ so that $\norm{w-v}> 2s/3$. Then we define a mapping $C\to \Sph_{X}$, $z\mapsto e_{z}$ by
		\begin{equation*}
			e_{z}:=\begin{cases}
				\frac{v-z}{\norm{v-z}} & \text{ if }\norm{v-z}\geq \frac{s}{3},\\
				\frac{w-z}{\norm{w-z}} & \text{ otherwise.}
			\end{cases}
		\end{equation*}
		The mapping $z\mapsto e_{z}$ is well-defined and the line segment $\left[z,z+\frac{s}{3}e_{z}\right]$ is contained in $C$ (either in $[z,v]$ or in $[z,w]$) for each $z\in C$.
	\end{proof}

	\begin{lemma}\label{lemma:every_village}
		Let $X$ be a normed space, $C\subseteq X$ be a convex and bounded set containing $0_{X}$, $s\in(0,1)$, $\Gamma\subseteq C$ be an $s$-separated, non-empty, non-singleton~set, $f\colon C\to C$ be a non-expansive mapping and $\varepsilon\in(0,1)$. Then there exists a non-expansive mapping $g\colon C\to C$ 
		%and for each $x\in \Gamma$ a direction $u_{x}\in\Sph_{X}$ 
		such that $\supnorm{g-f}\leq \varepsilon$ and
		\begin{equation}\label{eq:bump}
			\norm{g(y)-g(x)}=\norm{y-x}
		\end{equation}
		for all $x\in \Gamma$ and $\displaystyle y\in C\cap \cl{B}_{X}\left(x,\frac{\varepsilon s}{12(1+\diam C)}\right)$.
	\end{lemma}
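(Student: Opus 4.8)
The plan is to locally perturb $f$ near each point $x\in\Gamma$ so that the perturbed map stretches a small ball around $x$ up to its full isometric size, while keeping the changes small enough and sufficiently separated that the resulting map is still non-expansive. The key geometric input is Lemma~\ref{lemma:tentpeg}: since $C$ is convex with $\diam C\geq s$ (the non-singleton $s$-separated set forces this), for each $x\in\Gamma$ there is a unit vector $e_x$ with $[x,x+\tfrac{s}{3}e_x]\subseteq C$, giving a genuine line segment along which we can push the values of $g$. I would first fix the relevant scale: set $\rho:=\tfrac{\varepsilon s}{12(1+\diam C)}$ (the radius appearing in \eqref{eq:bump}) and work with balls of radius comparable to $\rho$ around the points of $\Gamma$; because $\Gamma$ is $s$-separated and $\rho\ll s$, these balls are pairwise disjoint and in fact $2\rho$-separated, so the perturbations near distinct points of $\Gamma$ do not interfere.

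Next, for a single $x\in\Gamma$, I would build a local modification. Consider the map $h_x$ that acts on $C\cap\cl{B}_X(x,2\rho)$ by reflecting/relocating the portion of the segment $[x,x+\tfrac{s}{3}e_x]$ near $x$ — concretely, one wants a non-expansive $h_x\colon C\to C$ which equals the identity outside $B_X(x,2\rho)$, satisfies $\supnorm{h_x-\id}\le 2\rho$, and is an isometry when restricted to $\cl{B}_X(x,\rho)\cap C$ mapping it onto a translate lying along the segment direction. A clean way is to use a variant of Lemma~\ref{lemma:flat}: apply that lemma (after translating so the relevant center is the origin) to obtain the "collapse toward the center" map $\Phi$, and compose with a translation along $e_x$ by a vector of length $\rho$, using convexity and the segment $[x,x+\tfrac{s}{3}e_x]$ to guarantee the image stays in $C$. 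Then set
\begin{equation*}
	g(z):=f\bigl(h^{-1}(z)\bigr)\quad\text{near }\Gamma,\qquad g(z):=f(z)\text{ elsewhere},
\end{equation*}
or more precisely $g:=f\circ h$ where $h\colon C\to C$ agrees with the composed local maps $h_x$ on the disjoint balls $B_X(x,2\rho)$, $x\in\Gamma$, and with $\id_C$ elsewhere. Since the local pieces are non-expansive, agree with the identity on the boundary of their supports, and have pairwise disjoint supports, $h$ is globally non-expansive; hence $g=f\circ h$ is non-expansive as a composition. The supremum estimate follows from $\supnorm{h-\id_C}\le 2\rho\le\varepsilon$ (using $2\rho=\tfrac{\varepsilon s}{6(1+\diam C)}\le\varepsilon$) together with $\lip(f)\le 1$, giving $\supnorm{g-f}\le\supnorm{h-\id_C}\le\varepsilon$.

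Finally, I would verify \eqref{eq:bump}. For $x\in\Gamma$ and $y\in C\cap\cl{B}_X(x,\rho)$, by construction $h$ restricted to $\cl{B}_X(x,\rho)\cap C$ is an isometry onto a subset of the segment $[x,x+\tfrac s3 e_x]$, so $\norm{h(y)-h(x)}=\norm{y-x}$ and the vector $h(y)-h(x)$ is a real multiple of $e_x$; moreover $h(x)$ and $h(y)$ both lie on that segment. The point of aligning everything along a line segment inside $C$ is that $f$ restricted to a line segment has the property that... — well, this is exactly where the argument must be careful: being non-expansive does not force $f$ to be an isometry on the segment. So in fact the construction must be arranged differently: rather than composing $f$ with an isometry, one wants $g$ itself to be an isometry on $\cl{B}_X(x,\rho)\cap C$ with values placed along $[f(x),f(x)+\tfrac s3 e']$ for a suitable direction $e'$, at a small modification cost $\le\varepsilon$. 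Thus I would instead define $g$ on $\cl{B}_X(x,2\rho)\cap C$ to interpolate (via a Lemma~\ref{lemma:flat}-type collapse) between the isometric assignment $y\mapsto f(x)+\lambda(y)e'_x$ on $\cl{B}_X(x,\rho)$ and the original $f$ on the boundary sphere of radius $2\rho$, checking that both pieces and the interpolation are non-expansive — this is where the explicit radius $\tfrac{\varepsilon s}{12(1+\diam C)}$ and the factor $\tfrac13$ from Lemma~\ref{lemma:tentpeg} are consumed in the Lipschitz bookkeeping.

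The main obstacle I anticipate is precisely this non-expansiveness check for the interpolation on the annular region $\cl{B}_X(x,2\rho)\setminus B_X(x,\rho)$: one must simultaneously match $f$ on the outer sphere and the chosen isometry on the inner sphere while keeping $\lip\le 1$, which is only possible because $f(x)$ and the target isometric copy of the small ball are within distance $O(\rho)$ of each other and the available "room" $\tfrac s3$ along $e_x$ vastly exceeds $\rho$ — quantitatively, this is what the constant $12(1+\diam C)$ is tuned to deliver. I expect Lemma~\ref{lemma:flat} (and a translation/rescaling of it) to do essentially all the work here once the geometry is set up correctly.
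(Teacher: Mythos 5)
You correctly identify the crux (one cannot just precompose $f$ with an isometry; $g$ itself must realise the distances near each $x\in\Gamma$, with values pushed along a segment supplied by Lemma~\ref{lemma:tentpeg}), but the mechanism you propose for the transition region does not work, and two essential ideas are missing. Your plan is to interpolate, over the annulus $\cl{B}_X(x,2\rho)\setminus B_X(x,\rho)$ with $\rho=\tfrac{\varepsilon s}{12(1+\diam C)}$, between the radial assignment $y\mapsto f(x)+\norm{y-x}e'_x$ on the inner ball and the \emph{unmodified} $f$ on the outer sphere. This cannot be non-expansive in general: take $y$ with $\norm{y-x}=\rho$ and $y'$ with $\norm{y'-x}=2\rho$ radially aligned, so $\norm{y-y'}=\rho$; then $g(y)=f(x)+\rho e'_x$ while $g(y')=f(y')$ may satisfy $\norm{f(y')-f(x)}$ close to $2\rho$, so $\norm{g(y)-g(y')}$ can be as large as $3\rho$. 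No choice of interpolation on an annulus of width comparable to the cone height can repair this; the obstruction is not the amount of room $\tfrac{s}{3}$ along $e_x$ (that is only needed to keep the image inside $C$) but the fact that $f$ still varies at unit rate right up to the inner ball.

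The paper's proof resolves this with two devices absent from your proposal. First, it \emph{flattens} $f$ near each $x\in\Gamma$ over an annulus of width comparable to the separation $s$ (not to $\rho$): using Lemma~\ref{lemma:flat} with inner radius $\delta\approx\varepsilon s/(1+\diam C)$ and outer radius $r=s/2$, it produces $g_0$ which is \emph{constant} on each $C\cap\cl{B}_X(x,\delta)$ at the cost $\lip(g_0)\le 1+\tfrac{\delta}{r-\delta}$. Second, it restores non-expansiveness by the global rescaling $g_1=(1-\delta/r)g_0$ toward $0_X$ --- this is where the hypothesis $0_X\in C$, which your argument never uses, is consumed --- at a sup-norm cost of $\delta\diam C/r$. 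Only then is the cone perturbation $z\mapsto g_1(x)+\norm{z-x}u_x$ (tapered back to $0$ on the annulus $\delta/2\le\norm{z-x}\le\delta$) added; it is $1$-Lipschitz against the now-constant $g_1$ on $\cl{B}_X(x,\delta)$, and a point on the sphere $\partial B_X(x,\delta)$ splits any segment leaving the ball so that global non-expansiveness follows. Note also that the resulting $g$ is not an isometry of the small ball (that is generally impossible onto a segment); it only satisfies $\norm{g(y)-g(x)}=\norm{y-x}$ when one of the two points is the centre $x$, which is all the lemma claims. As written, your construction has a genuine gap at the Lipschitz bookkeeping you yourself flag as the main obstacle.
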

	\begin{proof}
		To simplify the formulae in the proof we set $r=s/\hl{2}$. Let $\delta\in (0,r/2)$ be a parameter to be determined in the course of the proof and for each $x\in \Gamma$, let $\Phi_{x}\colon C-x\to C-x$ be the mapping $\Phi$ given by the conclusion of Lemma~\ref{lemma:flat} applied to $C-x$, $r$ and $\delta$. Here $C-x$ denotes the set $\set{y-x\colon y\in C}$. We define $g_{0}\colon C\to C$ by
		\begin{equation*}
			g_{0}(z)=\begin{cases}
				f(z) & \text{ if }z\in C\setminus \bigcup_{x\in\Gamma}B_{X}(x,r)\\
				f(x+\Phi_{x}(z-x)) & \text{ if }z\in C\cap B_{X}(x,r) \text{ and } x\in \Gamma.
			\end{cases}
		\end{equation*}
		Note that $g_{0}$ is a well-defined, continuous mapping $C\to C$,
		\begin{equation*}
			\supnorm{g_{0}-f}\leq \delta, \qquad \lip(g_{0})\leq1+\frac{\delta}{r-\delta}
		\end{equation*}
		and that $g_{0}$ is constant on each set $C\cap \cl{B}_{X}(x,\delta)$ with $x\in \Gamma$. Put differently, the latter condition is
		\begin{equation}\label{eq:g0_cnst}
			g_{0}(z)=g_{0}(x) \qquad\text{whenever }z\in C\cap\cl{B}_{X}(x,\delta)\text{ and }x\in\Gamma.
		\end{equation}
		Next, we define $g_{1}\colon C\to C$ by
		\begin{equation*}
			g_{1}:=\left(1-\frac{\delta}{r}\right)g_{0}.
		\end{equation*}
		Note that $g_{1}$ is a well-defined mapping $C\to C$, since $0_{X}\in C$, $C$ is convex and $g_{0}\colon C\to C$. Moreover, we have 
		\begin{equation*}
			\supnorm{g_{1}-g_{0}}\leq \frac{\delta\diam C}{r},\qquad \lip(g_{1})\leq 1
		\end{equation*}
		and that $g_{1}$ inherits property \eqref{eq:g0_cnst} from $g_{0}$: 
		\begin{equation}\label{eq:g1_cnst}
			g_{1}(z)=g_{1}(x) \qquad\text{whenever }z\in C\cap\cl{B}_{X}(x,\delta)\text{ and }x\in\Gamma.
		\end{equation}
		Let the family of directions $(e_{z})_{z\in C}\subseteq\Sph_{X}$ be given by the conclusion of Lemma~\ref{lemma:tentpeg}. For the family of directions $(u_{x})_{x\in \Gamma}\subseteq \Sph_{X}$, defined by
		\begin{equation*}
			u_{x}:=e_{g_{1}(x)},\qquad x\in \Gamma,
		\end{equation*}
		we infer that the line segment $\left[g_{1}(x),g_{1}(x)+\frac{s}{3}u_{x}\right]$ is a subset of $C$ for each $x\in\Gamma$.	 
		We may now define $g_{2}\colon C\to C$ by
		\begin{equation*}
			g_{2}(z)=\begin{cases}
				g_{1}(z) & \text{ if }z\in C\setminus \bigcup_{x\in\Gamma}B_{X}(x,\delta),\\
				\hl{g_{1}(x)+(\delta-\norm{z-x})u_{x} & \hl{\text{ if }z\in C\cap B_{X}(x,\delta)\setminus B_{X}(x,\delta/2) \text{ and }x\in \Gamma}} \\	
				\hl{g_{1}(x)+\norm{z-x}u_{x}} & \hl{\text{ if } z\in C\cap B_{X}(x,\delta/2) \text{ and }x\in\Gamma.}
				
			\end{cases}
		\end{equation*}
		Note that $g_{2}$ is a well-defined, continuous mapping $C\to C$, where the continuity relies on \eqref{eq:g1_cnst}. Moreover, we have
		\begin{equation*}
			\supnorm{g_{2}-g_{1}}\leq \delta/2\qquad\text{ and }\qquad\lip(g_{2})\leq 1.
		\end{equation*}
		Setting $g=g_{2}$ we obtain a non-expansive mapping $C\to C$ satisfying 
		\begin{multline*}
			\supnorm{g-f}\leq \supnorm{g_{2}-g_{1}}+\supnorm{g_{1}-g_{0}}+\supnorm{g_{0}-f}\\
			\leq \frac{\delta}{2}+\frac{\delta\diam C}{r}+\delta\leq \frac{3\delta(1+\diam C)}{r}.
		\end{multline*}
		Thus, we achieve $\supnorm{g-f}\leq \varepsilon$ and \eqref{eq:bump} by setting
		\begin{equation*}
			\delta:=\frac{\varepsilon r}{3(1+\diam C)}.
		\end{equation*}
	\end{proof}
	\section{Porosity.}
	This final section is devoted to the proofs of the main results Theorems~\ref{thm:main} and \ref{thm:dual}. 
	\subsection{Proving Theorem~\ref{thm:main}.}
	\begin{lemma}\label{lemma:porous}
		Let $X$ be a normed space, $C\subseteq X$ be a bounded, convex, non-empty set and $\mc{M}=\mc{M}(C)$ denote the space of non-expansive mappings $C\to C$ equipped with the supremum metric. Let $s\in (0,1)$, $\Gamma\subseteq C$ be an $s$-separated, non-empty, non-singleton set, $\lambda\in(0,1)$ and $\mc{N}_{\lambda,\Gamma,s}$ denote the set of mappings $f\in \mc{M}$ for which 
		\begin{equation*}
			\inf_{x\in \Gamma}\lip(f,x,s)\leq \lambda.
		\end{equation*}
		Then $\mc{N}_{\lambda,\Gamma,s}$ is a lower porous subset of $\mc{M}$. In fact $\mc{N}_{\lambda,\Gamma,s}$ is lower porous at every point of $\mc{M}$. 
	\end{lemma}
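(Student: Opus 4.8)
The plan is to verify, at \emph{every} point $f\in\mc{M}$, the characterisation of lower porosity given in part~(b) of Lemma~\ref{lemma:def_porous} with $\phi=\id$ (which, since $\id\in\Theta(t)$, is precisely lower porosity, and for which $\phi^{-1}=\id$ and $K(\phi)=1$). This needs $\mc{M}$ to be perfect, which I would dispatch in one line: for $f\in\mc{M}$ and $\delta>0$, the mapping $(1-\theta)f+\theta c$ with $c\in C$ and $\theta>0$ small lies in $\mc{M}$, is within $\delta$ of $f$, and can be chosen distinct from $f$ because $C$ has at least two points. I would also first reduce to the case $0_X\in C$: translating $C$ by a vector $-p$ induces an isometry $\mc{M}(C)\to\mc{M}(C-p)$, $g\mapsto(z\mapsto g(z+p)-p)$, which carries $\mc{N}_{\lambda,\Gamma,s}$ onto $\mc{N}_{\lambda,\Gamma-p,s}$ (local Lipschitz quantities being translation-invariant) and preserves lower porosity at every point, so this reduction is harmless.

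With these preliminaries, I would fix an arbitrary $f\in\mc{M}$ and produce $\varepsilon_0>0$ and $\beta\in(0,1)$ so that for every $\varepsilon\in(0,\varepsilon_0)$ there is $g\in\mc{M}$ with $\supnorm{g-f}\le\varepsilon$ and $B_{\mc{M}}(g,\beta\varepsilon)\cap\mc{N}_{\lambda,\Gamma,s}=\emptyset$. The mapping $g$ comes from Lemma~\ref{lemma:every_village} applied to $C$, $s$, $\Gamma$, $f$ and the parameter $\varepsilon$: it is non-expansive, satisfies $\supnorm{g-f}\le\varepsilon$, and acts isometrically from each $x\in\Gamma$ on $C\cap\cl{B}_{X}(x,\rho)$, where $\rho:=\frac{\varepsilon s}{12(1+\diam C)}$. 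The point is that this local rigidity survives small perturbations. Using Lemma~\ref{lemma:tentpeg} — applicable because $\diam C\ge s$, since $\Gamma$ is an $s$-separated, non-singleton subset of $C$ — I would pick for each $x\in\Gamma$ a direction $e_x\in\Sph_{X}$ with $[x,x+\frac{s}{3}e_x]\subseteq C$. Since $\varepsilon<1\le 1+\diam C$ forces $\rho<\frac{s}{12}<\frac{s}{3}$, the point $y_x:=x+\rho e_x$ lies in $C\cap\cl{B}_{X}(x,\rho)$ with $\norm{y_x-x}=\rho$, so $\norm{g(y_x)-g(x)}=\rho$.

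Now for any $h\in\mc{M}$ with $\supnorm{h-g}<\beta\varepsilon$ and any $x\in\Gamma$, the triangle inequality gives $\norm{h(y_x)-h(x)}>\rho-2\beta\varepsilon$, hence $\lip(h,x,s)\ge\frac{\norm{h(y_x)-h(x)}}{\norm{y_x-x}}>1-\frac{2\beta\varepsilon}{\rho}$. A direct computation gives $\frac{2\beta\varepsilon}{\rho}=\frac{24\beta(1+\diam C)}{s}$, which is independent of $\varepsilon$; choosing $\beta:=\frac{(1-\lambda)s}{48(1+\diam C)}$ makes it equal to $\frac{1-\lambda}{2}$, so that $\lip(h,x,s)>\frac{1+\lambda}{2}>\lambda$ for every $x\in\Gamma$, whence $\inf_{x\in\Gamma}\lip(h,x,s)>\lambda$ and $h\notin\mc{N}_{\lambda,\Gamma,s}$. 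With this $\beta$ we have $\beta<\frac{1}{48}<1$, and setting $\varepsilon_0:=1$ secures both $\varepsilon\in(0,1)$ (needed to invoke Lemma~\ref{lemma:every_village}) and $\rho<\frac{s}{3}$ (needed for $y_x\in C$). This verifies the $(\varepsilon_0,\beta)$-condition at the arbitrary point $f$, so $\mc{N}_{\lambda,\Gamma,s}$ is lower porous at every point of $\mc{M}$.

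The crux is the bookkeeping of constants in the last step: the allowed size $\varepsilon$ of the perturbation $g-f$ and the radius $\rho$ of the isometry-region of $g$ are coupled linearly by Lemma~\ref{lemma:every_village}, and the radius $\beta\varepsilon$ of the excluded ball must be a fixed fraction of $\rho$ — but because $\rho/\varepsilon$ is a positive constant, a single uniform $\beta$ works for all $\varepsilon$, and the ratio is not degraded. Everything else (well-definedness and non-expansiveness of $g$, existence of the points $y_x$, the translation reduction, perfectness of $\mc{M}$) is a routine consequence of the cited lemmas, so I do not anticipate any obstruction requiring a new idea.
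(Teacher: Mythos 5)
Your proposal is correct and follows essentially the same route as the paper: verify the $(\varepsilon_0,\beta)$-condition of Lemma~\ref{lemma:def_porous}\eqref{def_lower_por} at an arbitrary $f\in\mc{M}$ by taking $g$ from Lemma~\ref{lemma:every_village} and using a test point $x+\rho e_x$ from Lemma~\ref{lemma:tentpeg} to show every $h$ in a ball of radius $\beta\varepsilon$ about $g$ avoids $\mc{N}_{\lambda,\Gamma,s}$, with only cosmetic differences in constants (the paper places $y_x$ at radius $\rho/2$ and takes $\beta=\frac{(1-\lambda)s}{96(1+\diam C)}$). Your added remarks on the perfectness of $\mc{M}$ and the translation reduction are correct and slightly more careful than the paper, which handles the latter with a bare ``without loss of generality''.
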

	\begin{proof}
		We verify the $(\varepsilon_{0},\beta)$-condition of Lemma~\ref{lemma:def_porous}\eqref{def_lower_por} for the set $\mc{N}_{\lambda,\Gamma,s}$. We may assume without loss of generality that $0_{X}\in C$. Let $f\in \mc{M}$, $\varepsilon_{0}:=1$ and $\varepsilon\in(0,\varepsilon_{0})$. 
		
		Let the mapping $g\in\mc{M}$ satisfying $\supnorm{g-f}\leq\varepsilon$ and \eqref{eq:bump} be given by the conclusion of Lemma~\ref{lemma:every_village} applied to $X$, $C$, $s$, $\Gamma$, $f$ and $\varepsilon$. Let $\beta=\beta(\lambda,\Gamma,s,C)\in (0,1)$ be a parameter to be determined later in the proof and let $h\in B_{\mc{M}}(g,\beta \varepsilon)$. Our task is to show that $h\notin \mc{N}_{\lambda,\Gamma,s}$.
		
		Consider the family of directions $(e_{z})_{z\in C}\subseteq \Sph_{X}$ given by Lemma~\ref{lemma:tentpeg} and for each $x\in \Gamma$ define a point $y_{x}\in X$ by
		\begin{equation*}
			y_{x}:=x+\frac{\varepsilon s}{24(1+\diam C)}e_{x}.
		\end{equation*}
		Then,
		\begin{equation*}
			y_{x}\in C\cap B_{X}\left(x,\frac{\varepsilon s}{12(1+\diam C)}\right)\subseteq C\cap B_{X}(x,s)
		\end{equation*}
		for every $x\in \Gamma$. Now, exploiting property \eqref{eq:bump} of $g$, we derive
		\begin{equation*}
			\norm{h(y_{x})-h(x)}\geq \norm{g(y_{x})-g(x)}-2\beta\varepsilon= \left(1-\frac{48\beta(1+\diam C)}{s}\right)\norm{y_{x}-x}
		\end{equation*}
		for every $x\in \Gamma$, which implies
		\begin{equation*}
			\inf_{x\in \Gamma}\lip(h,x,s)\geq 1-\frac{48\beta(1+\diam C)}{s}>\lambda,
		\end{equation*}
		and therefore $h\notin \mc{N}_{\lambda,\Gamma,s}$, when we set
		\begin{equation*}
			\beta :=\frac{(1-\lambda)s}{96(1+\diam C)}.
		\end{equation*}
	\end{proof}

	We are now ready to prove Theorem~\ref{thm:main}:
	\begin{proof}[Proof of Theorem~\ref{thm:main}]
		For each $j\in\N$, let the set $\Gamma_{j}$ be chosen as a maximal $2^{-j}\diam C$-separated subset of $C$. For $j,k\in\N$, let $s_{j,k}:=2^{-j-k}\min\set{1,\diam C}$ and note that each set $\Gamma_{j}$ is $s_{j,k}$-separated for every $k\in\N$. 
		The subset $\mc{N}$ of $\mc{M}$ is then defined as
		\begin{equation*}
			\mc{N}:=\bigcup_{\lambda\in \Q\cap (0,1)}\bigcup_{j\in\N}\bigcup_{k\in\N}\mc{N}_{\lambda,\Gamma_{j},s_{j,k}},
		\end{equation*}
		where the sets $\mc{N}_{\lambda,\Gamma_{j},s_{j,k}}$ are given by Lemma~\ref{lemma:porous}. Then $\mc{N}$ is a $\sigma$-lower porous subset of $\mc{M}$. 
		
		Let now $f\in\mc{M}\setminus \mc{N}$. In view of Lemma~\ref{lemma:Gdelta}, to prove that $R(f)$ is residual in $C$, we only need to show that it is dense in $C$. We will show that $\bigcup_{j\in\N}\Gamma_{j}\subseteq R(f)$. Since the former set is dense in $C$, this will establish the density of $R(f)$ in $C$.
		
		Let $j\in\N$ and $x\in\Gamma_{j}$. Since $f\notin \bigcup_{\lambda\in \Q\cap (0,1)}\bigcup_{k\in\N}\mc{N}_{\lambda,\Gamma_{j},s_{j,k}}$, we have that $\lip(f,x,s_{j,k})> \lambda$ for every $k\in \N$ and $\lambda\in \Q\cap(0,1)$. Hence, $\lip(f,x,s_{j,k})=1$ for every $k\in\N$, which implies $\lip(f,x)=1$ and $x\in R(f)$. 
	\end{proof}
	\subsection{Reich's Question}\label{subsec:Reich}
	Before moving on to the proof of Theorem~\ref{thm:dual}, we note that Lemma~\ref{lemma:porous} and its foundations in Section~\ref{sec:perturb} provide an elementary answer to Reich's question (Question~\ref{q:reich}). Theorem~\ref{thm:elementary} actually provides a strong affirmative answer to Question~\ref{q:reich}: observe that the condition $\lip(f,x)<1$ defining the set $\mc{V}_{x}$ in Theorem~\ref{thm:elementary} is weaker than the condition $\lip(f)<1$ defining the set of strict contractions. We also note that \cite[Theorem~2.2]{bargetz_dymond2016} is an immediate corollary of Theorem~\ref{thm:elementary} and Lemma~\ref{lemma:Gdelta}.
	\begin{thm}\label{thm:elementary}
		Let $X$ be a Banach space, $C\subseteq X$ be a bounded, convex, non-empty, non-singleton set, $x\in C$ and $\mc{M}=\mc{M}(C)$ denote the space of non-expansive mappings $C\to C$ equipped with the supremum metric. Then the set
		\begin{equation*}
			\mc{V}_{x}:=\set{f\in\mc{M}\colon \lip(f,x)<1}
		\end{equation*}
		is $\sigma$-lower porous in $\mc{M}$.
	\end{thm}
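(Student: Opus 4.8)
The plan is to realise $\mc{V}_{x}$ as a countable union of subsets of sets of the form $\mc{N}_{\lambda,\Gamma,s}$ from Lemma~\ref{lemma:porous}, each of which is lower porous at every point of $\mc{M}$. Since $C$ is non-singleton, I fix once and for all a point $x'\in C$ with $c:=\norm{x'-x}>0$, set $\Gamma:=\set{x,x'}$, and for $k\in\N$ put $s_{k}:=2^{-k}\min\set{1,c}$. Then $\Gamma$ is a non-empty, non-singleton, $s_{k}$-separated subset of $C$ for every $k\in\N$, while $s_{k}\in(0,1)$ and $s_{k}\to 0$. (Note also that $\mc{M}$ is a perfect metric space here, since $C$ is non-singleton, so that Lemma~\ref{lemma:porous} does apply to $\Gamma$ at each scale $s_{k}$.)

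The first step is a routine reduction. Using that $r\mapsto\lip(f,x,r)$ is non-decreasing with $\lim_{r\to 0}\lip(f,x,r)=\lip(f,x)$, I claim that for $f\in\mc{M}$ one has $\lip(f,x)<1$ if and only if there exist $\lambda\in\Q\cap(0,1)$ and $k\in\N$ with $\lip(f,x,s_{k})\leq\lambda$: for the forward direction pick rational $\lambda$ with $\lip(f,x)<\lambda<1$, then $r_{0}>0$ with $\lip(f,x,r_{0})<\lambda$, then $k$ with $s_{k}\leq r_{0}$ and use monotonicity; the converse is immediate from $\lip(f,x)\leq\lip(f,x,s_{k})$. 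Hence
\[
\mc{V}_{x}=\bigcup_{\lambda\in\Q\cap(0,1)}\bigcup_{k\in\N}\set{f\in\mc{M}\colon\lip(f,x,s_{k})\leq\lambda}.
\]
Now, since $x\in\Gamma$, we have $\inf_{z\in\Gamma}\lip(f,z,s_{k})\leq\lip(f,x,s_{k})$, so $\set{f\in\mc{M}\colon\lip(f,x,s_{k})\leq\lambda}\subseteq\mc{N}_{\lambda,\Gamma,s_{k}}$. By Lemma~\ref{lemma:porous} the set $\mc{N}_{\lambda,\Gamma,s_{k}}$ is lower porous at every point of $\mc{M}$; and lower porosity of a set at a point is inherited by every subset (a ball avoiding the larger set avoids the smaller one, so the quantity $\gamma(q,r,\cdot)$ only increases under passing to a subset). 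Thus each set in the displayed union is lower porous, and $\mc{V}_{x}$, being a countable union of lower porous sets, is $\sigma$-lower porous in $\mc{M}$.

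The only point that needs a little care is the choice of the countable family of scales $s_{k}$: it must simultaneously tend to $0$ (so that any radius $r$ witnessing $\lip(f,x)<1$ is dominated by some $s_{k}$), stay below the separation $c$ of the two chosen points of $\Gamma$, and stay below $1$ (both needed to invoke Lemma~\ref{lemma:porous} for $\Gamma$ at scale $s_{k}$). The choice $s_{k}=2^{-k}\min\set{1,c}$ does all three at once, and from there the statement follows with no further work beyond citing Lemma~\ref{lemma:porous}.
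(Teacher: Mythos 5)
Your proof is correct and follows essentially the same route as the paper's: fix a second point of $C$ to form a two-point $s$-separated set $\Gamma$ containing $x$, cover $\mc{V}_{x}$ by the countable family $\mc{N}_{\lambda,\Gamma,s_{k}}$ over rational $\lambda$ and a sequence of scales tending to $0$, and invoke Lemma~\ref{lemma:porous}. The only (immaterial) difference is your choice of scales $2^{-k}\min\set{1,c}$ versus the paper's $\min\set{\norm{y-x},1/2}/j$, and your proof spells out the routine monotonicity and subset arguments that the paper leaves implicit.
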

	\begin{proof}
		Fix $y\in C\setminus\set{x}$ and set $s=\min\set{\norm{y-x},1/2}$ and $\Gamma=\set{x,y}$. Then the set $\mc{V}_{x}$ is a subset of the countable union
		\begin{equation*}
			\bigcup_{\lambda\in\Q\cap(0,1)}\bigcup_{j\in\N}\mc{N}_{\lambda,\Gamma,s/j},
		\end{equation*}	
		where the lower porous subsets $\mc{N}_{\lambda,\Gamma,s\hl{/j}}$ of $\mc{M}$ are given by Lemma~\ref{lemma:porous}.
	\end{proof}
	
	\subsection{Proving Theorem~\ref{thm:dual}.}
	\begin{lemma}\label{lemma:phiinv}
		Let $K>1$ and $\phi,\hl{\xi}\colon (0,1/K)\to (0,\infty)$ be strictly increasing, concave functions satisfying 
		\begin{equation*}
			\phi(t)\hl{\xi}(t)\geq \frac{t}{K}\quad\text{ for all }t\in (0,1/K)\qquad\text{ and }\qquad\lim_{t\to 0}\phi(t)=\lim_{t\to 0}\hl{\xi}(t)=0.
		\end{equation*}
		Then the function 
		\begin{equation*}
			(0,\sup\phi)\to (0,\infty),\qquad t\mapsto \frac{\phi^{-1}(t)}{t}
		\end{equation*}
		is increasing and satisfies
		\begin{equation*}
			\lim_{t\to 0}\frac{\phi^{-1}(t)}{t}=0.
		\end{equation*}
	\end{lemma}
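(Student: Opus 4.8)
The plan is to reduce both assertions to a single elementary fact about concave functions, namely that $t\mapsto t/\phi(t)$ is increasing on $(0,1/K)$ — the very observation already used in the proof of Lemma~\ref{lemma:pairs}. Since $\phi$ is concave on $(0,1/K)$ with $\lim_{t\to 0}\phi(t)=0$, for $0<s_{1}<s_{2}<1/K$ one writes $s_{1}$ as a convex combination of $s_{2}$ and a point tending to $0$; concavity then yields $\phi(s_{1})/s_{1}\ge\phi(s_{2})/s_{2}$ in the limit, equivalently $s_{1}/\phi(s_{1})\le s_{2}/\phi(s_{2})$.

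For the monotonicity of $t\mapsto\phi^{-1}(t)/t$: first note that $\phi$ being strictly increasing, continuous (concave functions on open intervals are continuous) and satisfying $\lim_{t\to0}\phi(t)=0$ forces its range to be $(0,\sup\phi)$, so that $\phi^{-1}\colon(0,\sup\phi)\to(0,1/K)$ is a well-defined, strictly increasing function. Substituting $s=\phi^{-1}(t)$, i.e.\ $t=\phi(s)$, gives $\phi^{-1}(t)/t=s/\phi(s)$; as $t$ increases, $s=\phi^{-1}(t)$ increases, and $s\mapsto s/\phi(s)$ is increasing by the fact above, so $t\mapsto\phi^{-1}(t)/t$ is increasing as a composition of increasing maps. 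This step uses neither $\hl{\xi}$ nor the product inequality $\phi(t)\hl{\xi}(t)\ge t/K$.

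For the limit $\lim_{t\to0}\phi^{-1}(t)/t=0$: this is where the hypotheses on $\hl{\xi}$ enter. Dividing $\phi(t)\hl{\xi}(t)\ge t/K$ by $t>0$ gives $\phi(t)/t\ge 1/(K\hl{\xi}(t))$ for all $t\in(0,1/K)$, and since $\lim_{t\to0}\hl{\xi}(t)=0$ the right-hand side tends to $+\infty$; hence $\lim_{t\to0}\phi(t)/t=+\infty$, equivalently $\lim_{t\to0}t/\phi(t)=0$. Finally, as $u\to0^{+}$ we have $\phi^{-1}(u)\to0^{+}$ (a monotone function has a one-sided limit at the endpoint, equal to the infimum of its range, which is $0$ here), so the substitution $u=\phi(t)$ converts $\lim_{u\to0}\phi^{-1}(u)/u$ into $\lim_{t\to0}t/\phi(t)=0$. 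I do not anticipate a genuine obstacle: the only point needing a word of care is recording the concavity fact about $t/\phi(t)$ (cited from, or re-derived as in, Lemma~\ref{lemma:pairs}), and the only place the product inequality is used is to upgrade $\lim_{t\to0}\phi(t)/t$ from merely being bounded below by a positive quantity (automatic from concavity) to being $+\infty$.
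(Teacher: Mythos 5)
Your proposal is correct and follows essentially the same route as the paper: monotonicity of $t\mapsto\phi^{-1}(t)/t$ from the concavity of $\phi$ with $\lim_{t\to0}\phi(t)=0$ (the paper phrases this dually via the convexity of $\phi^{-1}$), and the limit from the product inequality $\phi(t)\hl{\xi}(t)\geq t/K$ combined with $\lim_{t\to0}\hl{\xi}(t)=0$. Your substitution $u=\phi(t)$ is just a repackaging of the paper's direct estimate $\phi^{-1}(t)/t\leq s/\phi(s)\leq K\hl{\xi}(s)$ for $t\in(0,\phi(s))$.
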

	\begin{proof}
		Note that $\phi^{-1}$ is a strictly increasing, convex function defined on the interval $(0,\sup\phi)$ and that $\lim_{t\to 0}\phi^{-1}(t)=0$. It follows that the function $\frac{\phi^{-1}(t)}{t}$ is increasing on $(0,\sup\phi)$. Given $\varepsilon>0$, we may choose $s>0$ sufficiently small so that $\hl{\xi}(s)<\varepsilon/K$. Then for every $t\in (0,\phi(s))$ we have
		\begin{equation*}
			\frac{\phi^{-1}(t)}{t}\leq \frac{s}{\phi(s)}\leq K\hl{\xi}(s)<\varepsilon.
		\end{equation*} 
	\end{proof}

	\begin{lemma}\label{lemma:porous2}
		Let $X$ be a normed space, $C\subseteq X$ be a bounded, convex~set and $\mc{M}=\mc{M}(C)$ denote the space of non-expansive mappings $C\to C$ equipped with the supremum metric. Let $\lambda\in (0,1)$, $k\in\N$, $K>1$ and $\phi,\hl{\xi}\colon(0,1/K)\to(0,\infty)$ be strictly increasing, concave functions satisfying 
		\begin{equation*}
			\phi(t)\hl{\xi}(t)\geq \frac{t}{K}\,\text{ for all }t\in (0,1/K)\quad\text{ and }\quad\lim_{t\to 0}\phi(t)=\lim_{t\to 0}\hl{\xi}(t)=0.
		\end{equation*}
		Let $\mathfrak{s}=(s_{j})_{j\in\N}$ be a strictly decreasing sequence of numbers in the interval $(0,1)\cap (0,\sup\phi)\cap (0,\diam C/2)$ converging to $0$ and satisfying 
		\begin{equation}\label{eq:sk}
			\frac{\phi^{-1}(s_{j})}{s_{j}}=\frac{2\phi^{-1}(s_{j+1})}{s_{j+1}}\qquad\text{ for all }j\in\N.
		\end{equation}
		Let $\mb{\Gamma}=(\Gamma_{j})_{j\in\N}$ be a sequence of subsets of $C$, where $\Gamma_{j}$ is $s_{j}$-separated, non-empty and non-singleton for each $j\in\N$. Let $\mc{P}_{\lambda,\mb{\Gamma},\mf{s},\phi,k}$ denote the set of mappings $f\in \mc{M}$ with the property that for every $j\geq k$ there exist $x\in \Gamma_{j}$ and $y\in C\cap B_{X}\left(x,\frac{(1-\lambda)\phi^{-1}(s_{j})}{48(1+\diam C)}\right)$ such that
		\begin{equation*}
			\lip\left(f,y,\phi^{-1}(s_{j})\right)\leq \lambda.
		\end{equation*}
		Then $\mc{P}_{\lambda,\mb{\Gamma},\mf{s},\phi,k}$ is a $\hl{\xi}$-lower porous subset of $\mc{M}$.
		In fact $\mc{P}_{\lambda,\mb{\Gamma},\mf{s},\phi,k}$ is $\hl{\xi}$-lower porous at every point of $\mc{M}$. 
	\end{lemma}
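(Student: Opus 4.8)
The plan is to verify, at an arbitrary point $f\in\mc{M}$, the $(\varepsilon_{0},\beta)$-condition of Lemma~\ref{lemma:def_porous}\eqref{def_lower_por} for the set $\mc{P}:=\mc{P}_{\lambda,\mb{\Gamma},\mf{s},\phi,k}$ with respect to the gauge function $\hl{\xi}$; as usual we may assume $0_{X}\in C$. Write $D:=1+\diam C>1$ and, for $j\in\N$, put $a_{j}:=\phi^{-1}(s_{j})/s_{j}$, so \eqref{eq:sk} reads $a_{j+1}=a_{j}/2$ and hence $a_{j}\downarrow 0$. The whole difficulty is a calibration: given a perturbation budget $\varepsilon$ we must select a scale $j=j(\varepsilon)$ so that a perturbation producing local Lipschitz constant near $1$ at scale $\phi^{-1}(s_{j})$ has sup-distance at most $\varepsilon$ from $f$ — which forces $\phi^{-1}(s_{j})\leq\frac{\varepsilon s_{j}}{12D}$ — and yet the porosity hole of radius $\hl{\xi}^{-1}(\beta\varepsilon)$ stays safely below $(1-\lambda)\phi^{-1}(s_{j})$; the geometric relation \eqref{eq:sk} together with the hypothesis $\phi(t)\hl{\xi}(t)\geq t/K$ is precisely what reconciles these two demands, and this is the only genuinely delicate point. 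Accordingly, fix $\beta:=\frac{1-\lambda}{96KD}\in(0,1)$, let $j_{0}(\varepsilon):=\min\set{j\in\N\colon a_{j}\leq\frac{\varepsilon}{12D}}$ (finite since $a_{j}\to 0$), and choose $\varepsilon_{0}\in(0,1)$ small enough that $j_{0}(\varepsilon)\geq\max\set{k,2}$ for all $\varepsilon\in(0,\varepsilon_{0})$ — possible because $j_{0}(\varepsilon)\to\infty$ as $\varepsilon\to 0$.

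Now fix $f\in\mc{M}$ and $\varepsilon\in(0,\varepsilon_{0})$, and put $j:=j_{0}(\varepsilon)$, $\rho:=\phi^{-1}(s_{j})=a_{j}s_{j}$ and $\varepsilon':=12Da_{j}$. By minimality of $j\geq 2$ we have $a_{j-1}>\frac{\varepsilon}{12D}$, hence $\frac{\varepsilon}{24D}<a_{j}\leq\frac{\varepsilon}{12D}$; in particular $\varepsilon'\in(\varepsilon/2,\varepsilon]\subseteq(0,1)$ and $\rho<s_{j}/12$. I would apply Lemma~\ref{lemma:every_village} to $X$, $C$, $s_{j}\in(0,1)$, the $s_{j}$-separated non-singleton set $\Gamma_{j}$, the mapping $f$ and the parameter $\varepsilon'$, obtaining $g\in\mc{M}$ with $\supnorm{g-f}\leq\varepsilon'\leq\varepsilon$ and
\begin{equation*}
	\norm{g(y)-g(x)}=\norm{y-x}\qquad\text{for all }x\in\Gamma_{j}\text{ and }y\in C\cap\cl{B}_{X}(x,\rho),
\end{equation*}
since the radius of effect $\frac{\varepsilon's_{j}}{12(1+\diam C)}$ equals $a_{j}s_{j}=\rho$. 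This $g$ will be the point demanded by Lemma~\ref{lemma:def_porous}\eqref{def_lower_por}, so it remains to show $B_{\mc{M}}(g,\hl{\xi}^{-1}(\beta\varepsilon))\cap\mc{P}=\emptyset$.

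Let $h\in\mc{M}$ with $\supnorm{h-g}<\hl{\xi}^{-1}(\beta\varepsilon)$. I claim $\lip(h,y,\rho)>\lambda$ for every $x\in\Gamma_{j}$ and every $y\in C\cap B_{X}\br*{x,\frac{(1-\lambda)\rho}{48D}}$; this contradicts $h\in\mc{P}$ at the index $j\geq k$ and so gives $h\notin\mc{P}$. Fix such $x$ and $y$, write $\sigma:=\frac{(1-\lambda)\rho}{48D}$ (so $\norm{x-y}<\sigma$), use Lemma~\ref{lemma:tentpeg} with $s=s_{j}\leq\diam C$ to pick $e_{x}\in\Sph_{X}$ with $\left[x,x+\frac{s_{j}}{3}e_{x}\right]\subseteq C$, and set $y':=x+(\rho-\sigma)e_{x}$. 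Then $y'\in C$ because $\rho-\sigma<\rho<s_{j}/3$, the triangle inequality gives $0<\rho-2\sigma<\norm{y'-y}<\rho$, and, as $g$ preserves distances from $x$ on $C\cap\cl{B}_{X}(x,\rho)$,
\begin{multline*}
	\norm{h(y')-h(y)}>\norm{g(y')-g(y)}-2\hl{\xi}^{-1}(\beta\varepsilon)\\
	\geq\norm{y'-x}-\norm{y-x}-2\hl{\xi}^{-1}(\beta\varepsilon)>\rho-2\sigma-2\hl{\xi}^{-1}(\beta\varepsilon).
\end{multline*}

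The last ingredient is the estimate $\hl{\xi}^{-1}(\beta\varepsilon)<\frac{(1-\lambda)\rho}{4}$. Evaluating $\phi(t)\hl{\xi}(t)\geq t/K$ at $t=\rho=\phi^{-1}(s_{j})$ gives $\hl{\xi}(\rho)\geq\frac{\rho}{Ks_{j}}=\frac{a_{j}}{K}$; then the concavity of $\hl{\xi}$ with $\lim_{t\to 0}\hl{\xi}(t)=0$ (which yields $\hl{\xi}(ct)\geq c\,\hl{\xi}(t)$ for $c\in(0,1)$) gives $\hl{\xi}\br*{\frac{(1-\lambda)\rho}{4}}\geq\frac{(1-\lambda)a_{j}}{4K}>\frac{(1-\lambda)\varepsilon}{96KD}=\beta\varepsilon$, so applying the increasing function $\hl{\xi}^{-1}$ proves the estimate. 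Substituting it into the displayed inequality and using $2\sigma\leq\frac{(1-\lambda)\rho}{24}$ together with $\norm{y'-y}<\rho$, a direct computation gives $\lip(h,y,\rho)\geq\frac{\norm{h(y')-h(y)}}{\norm{y'-y}}>1-\frac{13}{24}(1-\lambda)>\lambda$, as required. Hence $B_{\mc{M}}(g,\hl{\xi}^{-1}(\beta\varepsilon))\cap\mc{P}=\emptyset$, and by Lemma~\ref{lemma:def_porous}\eqref{def_lower_por} the set $\mc{P}$ is $\hl{\xi}$-lower porous at $f$; since $f\in\mc{M}$ was arbitrary, it is $\hl{\xi}$-lower porous at every point of $\mc{M}$. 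The one real obstacle is the calibration of $j(\varepsilon)$ and $\varepsilon'$ carried out in the first paragraph; the perturbation (Lemma~\ref{lemma:every_village}), the choice of comparison point (Lemma~\ref{lemma:tentpeg}), and the passage from $\phi\hl{\xi}\geq t/K$ to the bound on $\hl{\xi}^{-1}(\beta\varepsilon)$ are then essentially forced.
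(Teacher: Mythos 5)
Your proposal is correct and follows essentially the same route as the paper: verifying the $(\varepsilon_{0},\beta)$-criterion of Lemma~\ref{lemma:def_porous}\eqref{def_lower_por}, constructing $g$ via Lemma~\ref{lemma:every_village} at a dyadically calibrated index $j$, comparing against a point along the direction from Lemma~\ref{lemma:tentpeg}, and using $\phi(t)\xi(t)\geq t/K$ together with concavity to dominate $\xi^{-1}(\beta\varepsilon)$ by a fixed fraction of $\phi^{-1}(s_{j})$. The only differences are bookkeeping ones (choosing $j$ by minimality and rescaling the perturbation parameter to $\varepsilon'=12Da_{j}$ so the radius of effect is exactly $\phi^{-1}(s_{j})$, and a different admissible constant $\beta$), which if anything slightly streamline the paper's estimates.
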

	\begin{proof}
		We verify the $(\varepsilon_{0},\beta)$-condition of Lemma~\ref{lemma:def_porous}\eqref{def_lower_por} for the set $\mc{P}_{\lambda,\mb{\Gamma},\mf{s},\phi,k}$. We may assume without loss of generality that $0_{X}\in C$.
		
		Let $f\in\mc{M}$, $\varepsilon_{0}=\min\set{\frac{\phi^{-1}(s_{k})}{s_{k}},1}$ and $\varepsilon\in (0,\varepsilon_{0})$. In view of Lemma~\ref{lemma:phiinv}, there is a unique integer $j\geq k$ satisfying
		\begin{equation}\label{eq:choice_j}
			\frac{\phi^{-1}(s_{j+1})}{s_{j+1}}<\varepsilon\leq \frac{\phi^{-1}(s_{j})}{s_{j}}.
		\end{equation}
		For future reference, we point out that the last inequality of \eqref{eq:choice_j} implies
		\begin{equation}\label{eq:equv_choice_j}
			\hl{\xi}^{-1}(\varepsilon/K)\leq \phi^{-1}(s_{j}).
		\end{equation}
		To verify this implication, apply the inequality $\phi(t)\hl{\xi}(t)\geq t/K$ in \eqref{eq:choice_j} to obtain $\varepsilon\leq K\hl{\xi}(\phi^{-1}(s_{j}))$. Finally, divide through by $K$ and apply the strictly increasing function $\hl{\xi}^{-1}$ to both sides.
		
		Let the mapping $g\in\mc{M}$ be given by the conclusion of Lemma~\ref{lemma:every_village} applied to $f$, $\varepsilon$, $\Gamma=\Gamma_{j}$ and $s=s_{j}$. Then we have $\supnorm{g-f}\leq\varepsilon$. Let $\beta=\beta(\lambda,\mb{\Gamma},\mf{s},\phi,K,k)\in (0,1)$ be a parameter to be determined later in the proof and let $h\in B_{\mc{M}}\left(g,\hl{\xi}^{-1}(\beta\varepsilon)\right)$ be arbitrary. To complete the proof, we show that $h\notin \mc{P}_{\lambda,\mb{\Gamma},\mf{s},\phi,k}$. 
		
		Let $x\in \Gamma_{j}$ and $y\in C\cap B_{X}\left(x,\frac{(1-\lambda)\phi^{-1}(s_{j})}{48(1+\diam C)}\right)$. Further, let the direction $e_{x}\in\Sph_{X}$ be given by Lemma~\ref{lemma:tentpeg} applied to $s=s_{j}$ and define 
		\begin{equation*}
			z:=x+\frac{\phi^{-1}(s_{j})}{24(1+\diam C)}e_{x}\in C.
		\end{equation*}
		Using \eqref{eq:sk} and the first inequality of \eqref{eq:choice_j}, we verify $y,z\in B_{X}\left(x,\frac{\varepsilon s_{j}}{12(1+\diam C)}\right)$, so that property \eqref{eq:bump} of $g$ applies to both $y$ and $z$. We moreover have 
		\begin{equation*}
			\frac{(1+\lambda)\phi^{-1}(s_{j})}{48(1+\diam C)}\leq \norm{z-y} \leq \frac{(3-\lambda)\phi^{-1}(s_{j})}{48(1+\diam C)}<\phi^{-1}(s_{j}),
		\end{equation*}
		which allows us to use the point $z$ to estimate $\lip\left(h,y,\phi^{-1}(s_{j})\right)$. Using also the property \eqref{eq:bump} of $g$, we derive
		\begin{align}\label{eq:inequalities}
			\lip\left(h,y,\phi^{-1}(s_{j})\right)\norm{z-y}&\geq\norm{h(z)-h(y)}\nonumber\\
			&\geq \norm{g(z)-g(y)}-2\hl{\xi}^{-1}(\beta\varepsilon)\nonumber\\
			&\geq \norm{g(z)-g(x)}-\norm{g(y)-g(x)}-2\hl{\xi}^{-1}(\beta\varepsilon)\nonumber\\
			&\geq\frac{\phi^{-1}(s_{j})}{24(1+\diam C)}-\frac{(1-\lambda)\phi^{-1}(s_{j})}{48(1+\diam C)}-2\hl{\xi}^{-1}(\beta\varepsilon)\nonumber\\
			&\geq \left(\frac{1+\lambda}{3-\lambda}-\frac{96\beta K(1+\diam C)\hl{\xi}^{-1}(\varepsilon/K)}{(1+\lambda)\phi^{-1}(s_{j})}\right)\norm{z-y}
			\nonumber\\
			&\geq \left(\frac{(1+\lambda)^{2}-96(3-\lambda)\beta K(1+\diam C)}{(1+\lambda)(3-\lambda)}\right)\norm{z-y}.
		\end{align}
		To get the penultimate inequality of \eqref{eq:inequalities} we use the inequality 
		\begin{equation*}
			\hl{\xi}^{-1}(\beta\varepsilon)\leq \beta K\hl{\xi}^{-1}(\varepsilon/K),
		\end{equation*}
		which holds, provided that
		\begin{equation}\label{eq:cond_beta}
			\beta K<1,
		\end{equation}
		because of the convexity of $\hl{\xi}^{-1}$ and $\lim_{t\to 0}\hl{\xi}^{-1}(t)=0$. The last inequality of \eqref{eq:inequalities} is due to \eqref{eq:equv_choice_j}. Finally, we prescribe that
		\begin{equation*}
			\beta:=\frac{(1-\lambda)^{2}(1+\lambda)}{97(3-\lambda)K(1+\diam C)},
		\end{equation*}
		so that \eqref{eq:cond_beta} is satisfied and \eqref{eq:inequalities} implies
		\begin{equation*}
			\lip\left(h,y,\phi^{-1}(s_{j})\right)>\lambda.
		\end{equation*}
		Since $x\in\Gamma_{j}$ and $y\in C\cap B_{X}\left(x,\frac{(1-\lambda)\phi^{-1}(s_{j})}{48(1+\diam C)}\right)$ were arbitrary and $j\geq k$, this argument establishes $h\notin \mc{P}_{\lambda,\mb{\Gamma},\mf{s},\phi,k}$. 
	\end{proof}
	The next lemma can be thought of as the dual of Lemma~\ref{lemma:porous2}.
	\begin{lemma}\label{lemma:porous3}
		Let $X$, $C$, $\mc{M}$, $\lambda$, $K$, $\phi$, $\hl{\xi}$, $\mf{s}=(s_{j})_{j\in\N}$ and $\mb{\Gamma}=(\Gamma_{j})_{j\in\N}$ satisfy the conditions of Lemma~\ref{lemma:porous2} and let the sets $\mc{P}_{\lambda,\mb{\Gamma},\mf{s},\phi,k}$ for $k\in\N$ be given by the conclusion of Lemma~\ref{lemma:porous2}. Assume, in addition, that each set $\Gamma_{j}$ is $s_{j}$-dense in $C$. Let $f\in\mc{M}\setminus \bigcup_{k\in\N}\mc{P}_{\lambda,\mb{\Gamma},\mf{s},\phi,k}$, $l\in\N$ and let 
		\begin{equation}\label{eq:setE}
			E_{\lambda,\mf{s},\phi,l}(f):=\set{x\in C\colon \sup_{j\geq l}\lip\left(f,x,\phi^{-1}(s_{j})\right)\leq \lambda}.
		\end{equation}
		Then $E_{\lambda,\mf{s},\phi,l}(f)$ is a $\phi$-upper porous subset of $C$. In fact $E_{\lambda,\mf{s},\phi,l}(f)$ is $\phi$-upper porous at every point of $C$.
	\end{lemma}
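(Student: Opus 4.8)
The plan is to verify the $(\alpha,\varepsilon)$-characterisation of $\phi$-upper porosity from Lemma~\ref{lemma:def_porous}\eqref{def_upper_por} for the set $E_{\lambda,\mf{s},\phi,l}(f)$ at an arbitrary point $q\in C$. Fix such a $q$. The key observation is that since $f\notin\bigcup_{k}\mc{P}_{\lambda,\mb{\Gamma},\mf{s},\phi,k}$, in particular $f\notin\mc{P}_{\lambda,\mb{\Gamma},\mf{s},\phi,l}$, so there is some index $j_0\geq l$ for which no pair $(x,y)$ with $x\in\Gamma_{j_0}$ and $y\in C\cap B_X\bigl(x,\tfrac{(1-\lambda)\phi^{-1}(s_{j_0})}{48(1+\diam C)}\bigr)$ satisfies $\lip(f,y,\phi^{-1}(s_{j_0}))\leq\lambda$. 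Wait --- that gives only one scale. The correct reading: negating membership in $\mc{P}_{\lambda,\mb{\Gamma},\mf{s},\phi,k}$ for \emph{every} $k$ means that for every $k$ there exists $j\geq k$ with the ``no good pair'' property; hence there are infinitely many such indices $j$, and in particular we can find them arbitrarily large, i.e. with $s_j$ (and $\phi^{-1}(s_j)$) arbitrarily small. Fix $\alpha\in(0,1)$ small enough, say $\alpha=\tfrac{(1-\lambda)}{96(1+\diam C)}$ or similar; this will be pinned down by the radius bookkeeping below.

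Now fix $\varepsilon>0$. Choose one of the infinitely many ``bad'' indices $j\geq l$ with $\phi^{-1}(s_j)$ so small that $\tfrac{(1-\lambda)\phi^{-1}(s_j)}{48(1+\diam C)}\le\varepsilon/2$ (possible by Lemma~\ref{lemma:phiinv}, which forces $\phi^{-1}(s_j)/s_j\to 0$, hence $\phi^{-1}(s_j)\to 0$). Using the added hypothesis that $\Gamma_j$ is $s_j$-dense in $C$, pick $x\in\Gamma_j$ with $d(q,x)\leq s_j$; shrinking $j$ further if needed we may also assume $s_j\leq\varepsilon/2$, so $d(q,x)\le\varepsilon/2$. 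I then want to produce a ball around a point $q'$ near $q$ that misses $E_{\lambda,\mf{s},\phi,l}(f)$ and has radius comparable to $\phi^{-1}(\alpha\, d(q,q'))$. The candidate is $q':=x$ together with the ball $B_X\bigl(x,\rho\bigr)$ where $\rho=\tfrac{(1-\lambda)\phi^{-1}(s_j)}{48(1+\diam C)}$: every $y\in C\cap B_X(x,\rho)$ fails the defining condition of $\mc{P}_{\lambda,\mb{\Gamma},\mf{s},\phi,j}$ at index $j$, i.e. $\lip(f,y,\phi^{-1}(s_j))>\lambda$, and since $j\geq l$ this means $\sup_{i\geq l}\lip(f,y,\phi^{-1}(s_i))>\lambda$, so $y\notin E_{\lambda,\mf{s},\phi,l}(f)$. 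Hence $B_X(x,\rho)\cap E_{\lambda,\mf{s},\phi,l}(f)=\emptyset$.

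It remains to check the two numerical requirements of Lemma~\ref{lemma:def_porous}\eqref{def_upper_por}: first $0<d(q,q')\leq\varepsilon$, which holds since $0<d(q,x)\le s_j\le\varepsilon/2$ once we also ensure $x\neq q$ (if $q$ itself happens to lie in $\Gamma_j$ we simply replace $s_j$-density with the next index, or argue $q\in E$ is trivially ruled out; more cleanly, perfectness of $C$ and $s_j$-density let us choose $x\in\Gamma_j$ with $d(q,x)>0$ --- actually $s_j$-density gives $d(q,x)\le s_j$ but not positivity, so I will instead allow $d(q,q')\le s_j$ and, in the boundary case $q=x$, note that the ball $B_X(q,\rho)$ directly misses $E$, which already proves porosity at $q$); and second $\phi^{-1}(\alpha\,d(q,q'))\leq\rho$. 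For the latter, since $d(q,q')=d(q,x)\le s_j$ and $\phi^{-1}$ is increasing, it suffices to have $\phi^{-1}(\alpha s_j)\le\tfrac{(1-\lambda)\phi^{-1}(s_j)}{48(1+\diam C)}$; using convexity of $\phi^{-1}$ together with $\lim_{t\to0}\phi^{-1}(t)=0$ one gets $\phi^{-1}(\alpha s_j)\le\alpha\,\phi^{-1}(s_j)$ for $\alpha\in(0,1)$, so choosing $\alpha=\tfrac{1-\lambda}{48(1+\diam C)}$ (note $\alpha<1$) closes the estimate. This verifies the $(\alpha,\varepsilon)$-condition for arbitrary $\varepsilon$, hence $E_{\lambda,\mf{s},\phi,l}(f)$ is $\phi$-upper porous at $q$, and since $q\in C$ was arbitrary, it is $\phi$-upper porous at every point of $C$. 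The main obstacle is purely bookkeeping: matching the radius $\rho$ of the empty ball, expressed in terms of $\phi^{-1}(s_j)$, against the required lower bound $\phi^{-1}(\alpha\,d(q,q'))$, which is where convexity of $\phi^{-1}$ and the $s_j$-density of $\Gamma_j$ both get used; the conceptual content --- that ``$f$ avoids every $\mc{P}_{\lambda,\mb{\Gamma},\mf{s},\phi,k}$'' supplies, at arbitrarily small scales, a whole ball of points where the local Lipschitz constant at scale $\phi^{-1}(s_j)$ exceeds $\lambda$ --- is immediate from the definitions.
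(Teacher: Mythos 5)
Your argument is correct and follows essentially the same route as the paper's proof: negate membership in $\mc{P}_{\lambda,\mb{\Gamma},\mf{s},\phi,k}$ for a large $k\geq l$ to obtain a bad index $j\geq k$, use the $s_{j}$-density of $\Gamma_{j}$ to place a point $x\in\Gamma_{j}$ within $s_{j}\leq\varepsilon$ of $q$, observe that the ball of radius $\tfrac{(1-\lambda)\phi^{-1}(s_{j})}{48(1+\diam C)}$ about $x$ misses $E_{\lambda,\mf{s},\phi,l}(f)$, and convert the radius via the convexity of $\phi^{-1}$ with $\alpha=\tfrac{1-\lambda}{48(1+\diam C)}$. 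Your explicit handling of the degenerate case $q=x$ is a small point the paper glosses over, but otherwise the two proofs coincide.
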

	\begin{proof}We verify the $(\alpha,\varepsilon)$-condition of Lemma~\ref{lemma:def_porous}\eqref{def_upper_por} for the set $E_{\lambda,\mf{s},\phi,l}(f)$.
		
		Let $z\in C$ and $\varepsilon>0$. Fix an integer $k\geq l$ large enough so that $s_{k}\leq\varepsilon$. Since $f\notin \mc{P}_{\lambda,\mb{\Gamma},\mf{s},\phi,k}$, there exists $j\geq k$ such that for all $x\in \Gamma_{j}$ and all $y\in C\cap B_{X}\left(x,\frac{(1-\lambda)\phi^{-1}(s_{j})}{48(1+\diam C)}\right)$ we have
		\begin{equation}\label{eq:f_notin_P}
			\lip\left(f,y,\phi^{-1}(s_{j})\right)>\lambda.
		\end{equation}
		Since $\Gamma_{j}$ is $s_{j}$-dense in $C$, we may choose $x\in\Gamma_{j}$ with $\norm{x-z}\leq s_{j}\leq \varepsilon$. Then, \eqref{eq:f_notin_P} holds for every $y\in C\cap B_{X}\left(x,\frac{(1-\lambda)\phi^{-1}(s_{j})}{48(1+\diam C)}\right)$, which implies $B_{X}\left(x,\frac{(1-\lambda)\phi^{-1}(s_{j})}{48(1+\diam C)}\right)\cap E_{\lambda,\mf{s},\phi,l}(f)=\emptyset$. Moreover, we have 
		\begin{equation*}
			\frac{(1-\lambda)\phi^{-1}(s_{j})}{48(1+\diam C)}\geq \frac{(1-\lambda)\phi^{-1}(\norm{x-z})}{48(1+\diam C)}\geq  \phi^{-1}\left(\frac{(1-\lambda)\norm{x-z}}{48(1+\diam C)}\right),
		\end{equation*}
		where the last inequality is due to the convexity of $\phi^{-1}$ and $\lim_{t\to 0}\phi^{-1}(t)=0$. Therefore, 
		\begin{equation*}
			B_{X}\left(x,\phi^{-1}(\alpha\norm{x-z})\right)\cap E_{\lambda,\mf{s},\phi,l}(f)=\emptyset,
		\end{equation*}
		with
		\begin{equation*}
			\alpha:=\frac{1-\lambda}{48(1+\diam C)}.
		\end{equation*}
	\end{proof}
	
	\hl{\begin{remark*}
			At this point the reader may ask whether the sets $E_{\lambda,\mf{s},\phi,l}(f)$ of Lemma~\ref{lemma:porous3} could even be shown to be $\phi$-lower porous. However, the current arguments do not appear sufficient to achieve this. Intuitively, the difference between upper and lower porous lies in whether the picture of porosity at a point (seeing close to the point a relatively large hole) occurs at all scales below a certain threshold or just along a specific sequence of scales converging to zero.	
			
			In the setting of Lemma~\ref{lemma:porous3}, a mapping $f\in \mc{M}$ not belonging to $\bigcup_{k\in\N}\mc{P}_{\lambda,\Gamma,\mf{s},\phi,k}$ means that there is a specific sequence $(j_{k})_{k\in\N}$ of numbers $j_{k}\geq k$ witnessing $f$'s failure of the `for every $j\geq k$ property' defining $\mc{P}_{\lambda,\Gamma,\mf{s},\phi,k}$ in Lemma~\ref{lemma:porous2}. In the proof of Lemma~\ref{lemma:porous3} it is shown that this sequence $(j_{k})_{k\in\N}$ corresponds to a specific sequence $(\eta_{k})_{k\in\N}$ of scales $\eta_{k}\searrow 0$ at which we see the porosity of the set $E_{\lambda,\mf{s},\phi,l}(f)$ at the point $z$. (In the proof of Lemma~\ref{lemma:porous3}, $\eta_{k}$ may be taken say as the quantity $3\dist(z,\Gamma_{j_{k}})$. In a picture of scale $\eta_{k}$ centered at the point $z$ we see a relatively large hole). Since the scales at which we see a porosity picture are tied to the sequence $(j_{k})_{k\in\N}$, which may grow arbitrarily fast, the arguments do not deliver lower porosity.
	\end{remark*}}

	\begin{proof}[Proof of Theorem~\ref{thm:dual}]
		If $\lim_{t\to 0}\phi(t)>0$ then $\phi$-upper porosity is exactly nowhere density and so the conclusion of Theorem~\ref{thm:dual} is implied by that of Theorem~\ref{thm:main}. Therefore, we may assume that $\lim_{t\to 0}\phi(t)=0$.	
		
		Define a sequence $\mf{s}=(s_{j})_{j\in\N}$ of positive numbers $s_{j}$ inductively as follows: Begin by setting $s_{1}:=\frac{1}{4}\min\set{1,\sup\phi,\diam C}$. If $j\in\N$ and $s_{j}\in (0,1)\cap (0,\sup\phi)\cap (0,\diam C/2)$ is already defined, we choose $s_{j+1}\in (0,s_{j})$ such that
		\begin{equation*}
			\frac{\phi^{-1}(s_{j+1})}{s_{j+1}}=\frac{\phi^{-1}(s_{j})}{2s_{j}}.
		\end{equation*}
		Such a choice exists due to Lemma~\ref{lemma:phiinv}. The sequence $(s_{j})_{j\in\N}$ is now defined and is strictly decreasing. Since $\frac{\phi^{-1}(s_{j})}{s_{j}}\leq 2^{-j+1}\frac{\phi^{-1}(s_{1})}{s_{1}}\to 0$ as $j\to \infty$, the properties of the function $t\mapsto \frac{\phi^{-1}(t)}{t}$ established in Lemma~\ref{lemma:phiinv} ensure that $\lim_{j\to\infty}s_{j}=0$.
		
		For each $j\in\N$ let $\Gamma_{j}$ be a maximal $s_{j}$-separated subset of $C$ and set $\mb{\Gamma}:=(\Gamma_{j})_{j\in\N}$. For each $\lambda\in (0,1)$ and every $k,l\in\N$, the objects $X$, $C$, $\mc{M}$, $\lambda$, $K$, $\phi$, $\hl{\xi}$, $\mf{s}$, $\mb{\Gamma}$, $k$ and $l$ now satisfy the conditions of Lemmas~\ref{lemma:porous2}~and~\ref{lemma:porous3}. For each $\lambda\in (0,1)$ and $k\in \N$ let the $\hl{\xi}$-lower porous subset $\mc{P}_{\lambda,\mb{\Gamma},\mf{s},\phi,k}$ of $\mc{M}$ be given by Lemma~\ref{lemma:porous2}. For each $\lambda\in (0,1)$, $l\in\N$ and $f\in\mc{M}$, let the subset $E_{\lambda,\mf{s},\phi,l}(f)$ of $C$ be given by \eqref{eq:setE}. Then it is readily verified that
		\begin{equation*}
			C\setminus R(f)\subseteq \bigcup_{\lambda\in \Q\cap (0,1)}\bigcup_{l\in\N}E_{\lambda,\mf{s},\phi,l}(f)
		\end{equation*}
		for every $f\in \mc{M}$. Moreover, by Lemma~\ref{lemma:porous3} the latter union of sets is $\sigma$-$\phi$-upper porous whenever $f\in\mc{M}\setminus\mc{N}$, where $\mc{N}$ is the $\sigma$-$\hl{\xi}$-lower porous subset of $\mc{M}$ given by
		\begin{equation*}
			\mc{N}:=\bigcup_{\lambda\in \Q\cap (0,1)}\bigcup_{k\in\N}\mc{P}_{\lambda,\mb{\Gamma},\mf{s},\phi,k}.
		\end{equation*}
		
	\end{proof}
	\bibliographystyle{plain}
	\bibliography{citations}

\begin{thebibliography}{10}

\bibitem{bargetz_dymond2016}
C.~Bargetz and M.~Dymond.
\newblock $\sigma$-porosity of the set of strict contractions in a space of
  non-expansive mappings.
\newblock {\em Israel Journal of Mathematics}, 214(1):235--244, 2016.

\bibitem{BDMR2021existence}
C.~Bargetz, M.~Dymond, E.~Medjic, and S.~Reich.
\newblock On the existence of fixed points for typical nonexpansive mappings on
  spaces with positive curvature.
\newblock {\em Topological Methods in Nonlinear Analysis}, 57(2):621--634,
  2021.

\bibitem{BDR2017porosity}
C.~Bargetz, M.~Dymond, and S.~Reich.
\newblock Porosity results for sets of strict contractions on geodesic metric
  spaces.
\newblock {\em Topological Methods in Nonlinear Analysis}, 50(1):89--124, 2017.

\bibitem{DBM1989porosite}
F.~S. De~Blasi and J.~Myjak.
\newblock Sur la porosit{\'e} de l’ensemble des contractions sans point fixe.
\newblock {\em CR Acad. Sci. Paris}, 308:51--54, 1989.

\bibitem{istruactescu2001fixed}
V.~I. Istr{\u{a}}{\c{t}}escu.
\newblock {\em Fixed point theory: an introduction}.
\newblock Reidel, 2001.

\bibitem{lindenstrauss2011frechet}
J.~Lindenstrauss, D.~Preiss, and J.~Ti{\v{s}}er.
\newblock {\em Fr{\'e}chet differentiability of Lipschitz maps and porous sets
  in Banach spaces}.
\newblock De Gruyter, 2011.

\bibitem{medjic2022successive}
E.~Medjic.
\newblock On successive approximations for compact-valued nonexpansive
  mappings.
\newblock {\em arXiv preprint arXiv:2203.03470}, 2022.

\bibitem{peetre1970concave}
J.~Peetre.
\newblock Concave majorants of positive functions.
\newblock {\em Acta Mathematica Academiae Scientiarum Hungarica},
  21(3-4):327--333, 1970.

\bibitem{rakotch1962note}
E.~Rakotch.
\newblock A note on contractive mappings.
\newblock {\em Proceedings of the American Mathematical Society},
  13(3):459--465, 1962.

\bibitem{reich_genericity}
S.~Reich.
\newblock Genericity and porosity in nonlinear analysis and optimization.
\newblock {\em ESI Preprint 1756}, Proceedings of CMS'05 (Computer Methods and
  Systems):pp. 9--15, Krakow 2005.

\bibitem{reich2001set}
S.~Reich and A.~J. Zaslavski.
\newblock The set of noncontractive mappings is $\sigma$-porous in the space of
  all nonexpansive mappings.
\newblock {\em Comptes Rendus de l'Acad{\'e}mie des Sciences-Series
  I-Mathematics}, 333(6):539--544, 2001.

\bibitem{RZ2016two}
S.~Reich and A.~J. Zaslavski.
\newblock Two porosity theorems for nonexpansive mappings in hyperbolic spaces.
\newblock {\em Journal of Mathematical Analysis and Applications},
  433(2):1220--1229, 2016.

\bibitem{zajivcek1976sets}
L.~Zaj{\'\i}{\v{c}}ek.
\newblock Sets of $\sigma $-porosity and sets of $\sigma$-porosity $(q)$.
\newblock {\em {\v{C}}asopis pro p{\v{e}}stov{\'a}n{\'\i} matematiky},
  101(4):350--359, 1976.

\bibitem{Zajicek05}
L.~Zajíček.
\newblock {On $\sigma$-porous sets in abstract spaces}.
\newblock {\em Abstract and Applied Analysis}, 2005(5):509 -- 534, 2005.

\end{thebibliography}
	
	\noindent Michael Dymond\\Mathematisches Insitut\\
	Universität Leipzig\\
	PF 10 09 02\\
	04109 Leipzig\\
	Germany\\
	\texttt{michael.dymond@math.uni-leipzig.de}\\[3mm]
\end{document}